\newcommand{\paren}[1]{\ensuremath{\left( #1\right) }}
\newenvironment{esn}{\begin{equation*}}{\end{equation*}}
\newcommand{\imf}[2]{\ensuremath{#1\!\paren{#2}}}
\newcommand{\cadlag}{c\`adl\`ag}
\newcommand{\fun}[3]{\ensuremath{#1:#2\to #3}}
\newcommand{\set}[1]{\ensuremath{\left\{ #1\right\} }}
\newcommand{\se}{\ensuremath{\bb{E}}}
\newcommand{\bb}[1]{\mathbb{#1}}
\newcommand{\fund}[3]{\ensuremath{#1:#2\mapsto #3}}
\newcommand{\na}{\ensuremath{\mathbb{N}}}
\newcommand{\abs}[1]{\,\left|#1\right|\,}
\newcommand{\eps}{\ensuremath{ \varepsilon}}
\newcommand{\p}{\ensuremath{ \sip  } }
\newcommand{\sip}{\bb{P}}
\newcommand{\indi}[1]{\si_{#1}}
\newcommand{\si}{{\ensuremath{\bf{1}}}}
\newcommand{\mc}[1]{\ensuremath{\mathscr{#1}}}
\DeclareMathOperator{\id}{Id} 
\newcommand{\proba}[1]{\ensuremath{\sip\! \left( #1 \right)}}
\newcommand{\defin}[1]{\textbf{#1}}
\theoremstyle{plain}
\newtheorem{teo}{Theorem}
\newtheorem{pro}{Proposition}
\newtheorem{cor}{Corollary}
\theoremstyle{definition}
\newtheorem*{defi}{Definition}
\newtheorem{example}{Example}
\newtheorem*{remark}{Remark}
\title[The Lamperti representation of CSBPs]{Proof(s) of the Lamperti representation of Continuous-State Branching Processes}
\author{Ma. Emilia Caballero}
\address[MEC]{Instituto de Matem\'aticas\\ Universidad Nacional Autonoma de M\'exico, \'Area de la investigaci\'on cient\'ifica, Circuito Exterior	Ciudad Universitaria, Coyoac\'an 04510, M\'exico, D.F. M\'exico}
\email{marie@matem.unam.mx}
\author{Amaury Lambert}
\address[AL]{UPMC Univ Paris 06, Laboratoire de Probabilit\'es et Mod\`eles Al\'eatoires, CNRS UMR 7599, Case courrier 188, 4 Place Jussieu, 75252 Paris Cedex 05, France.}
\email{amaury.lambert@upmc.fr}
\author{Ger{\'o}nimo Uribe Bravo}
\address[GUB]{Instituto de Investigaciones en Matem\'aticas Aplicadas y en Sistemas, Universidad Nacional Aut\'onoma de M\'exico, Mexico City, A.P. 20-726, Mexico}
\email[Corresponding author]{geronimo@sigma.iimas.unam.mx}
\subjclass[2000]{Primary 60J80; secondary 60B10, 60G44, 60G51, 60H20}
\keywords{Continuous-state branching processes, spectrally positive L\'evy processes, random time change, stochastic integral equations, Skorohod topology.}
\date{\today}
\begin{document}
\begin{abstract}
This paper uses two new ingredients, namely stochastic differential equations satisfied by continuous-state branching processes (CSBPs), and a topology under which the Lamperti transformation is continuous, in order to provide self-contained proofs of Lamperti's 1967 representation of CSBPs in terms of spectrally positive L\'evy processes. The first proof is a direct probabilistic proof, and the second one uses approximations by discrete processes, for which the Lamperti representation is evident.
\end{abstract}
\maketitle
\section{Introduction}
\label{intro}
\subsection{The Lamperti representation theorem}
During the 1960s and early 70s, John Lamperti provided relationships  between \emph{L\'evy processes} and two other classes of Markov processes. The first class was that of \emph{continuous-state branching processes} (CSBPs for short) in \cite{csbp}, and the second one was that of \emph{positive self-similar} processes in \cite{lamp72} (then called \emph{positive semi-stable}; the reader might also wish to consult the recent survey \cite{surveyexponentialfunctionals}). Here, we are interested in the former, but both relationships have had a strong impact on recent research. From now on, we will refer to the first relationship as \emph{the} Lamperti representation. Roughly, it provides a one-to-one correspondence, via a simple \emph{random time change}, between CSBPs and (possibly killed) L\'evy process with no negative jumps. The Lamperti representation has proved useful in the study of CSBPs (as in \cite{binghamCBP}), but also in that of L\'evy processes (e.g. \cite{bertoinLocalTimesViaCSBP}) and superprocesses (see for example \cite{leGallSnakes}). 

Lamperti {announces} his representation theorem in \cite{csbp}, and assures that `proofs of the main results will appear elsewhere, but he never published them. Nine years have elapsed before this result was proved by I.S. Helland in \cite{weakConvUnderTimeChange} by discrete approximations. There is one missing step in Helland's paper, since non-conservative cases are not included. Also, M.L. Silverstein \cite[Theorem 4]{silversteinTimeChange} gives an `analytic paraphrase of Lamperti's result', namely, he proves by analytic methods, that CSBP laws are in one-to-one correspondence with Laplace exponents of L\'evy processes with no negative jumps. The Lamperti representation, as a path transformation, is not studied there.

Our goal here is to give two proofs of  the Lamperti representation. One is a direct proof of the Lamperti representation (including the absence of negative jumps) using probabilistic arguments (infinite divisibility, strong Markov property, martingales, stopping theorems, stochastic differential equations). The other one is a proof by discrete approximations, in the same vein as Helland, but using a new topology on Skorohod space.\\
\\
The state space we will work on is $E=[0,\infty]$ with any metric $\rho$ which makes it homeomorphic to $[0,1]$. We let $*$ stand for the convolution of measures, and use the convention $z+\infty=\infty$  for any $z\in E$.
\begin{defi}
\label{def : CSBP}
A \defin{continuous-state branching process}, in short \defin{CSBP}, is a conservative and c\`adl\`ag Markov process with values in $E$,  whose transition kernels $\paren{P_t}_{t\geq 0}$ satisfy the following \defin{branching property}:
\begin{esn}
\imf{P_t}{z_1,\cdot}*\imf{P_t}{z_2,\cdot}=\imf{P_t}{z_1+z_2,\cdot}
\end{esn}
for all $t\geq 0$ and $z_1,z_2\in E$. 
\end{defi}
\begin{remark}
We could also have defined CSBPs as stochastically continuous instead of \cadlag.  In the forthcoming Proposition \ref{prop : prelim}, we will see that 0 and $\infty$ are \emph{absorbing} states for a CSBP. We could give an analogous definition if the state-space were $[0,\infty)$ without the conservativity assumption; however, using $\infty$ as the cemetery point for the former we obtain an $E$-valued conservative process which will turn out to be a CSBP and have the Feller property with respect to the metric $\rho$.% (We can use Proposition \ref{differentiability} to see that the above definition coincides with that of \cite{silversteinTimeChange}.)
\end{remark}
We now define the \emph{Lamperti transformation}, which acts on the Skorohod space of \cadlag\ trajectories with values in $E$ that are \emph{absorbed at zero and infinity}, that will be denoted $D$. More formally, $D$ consists of functions $\fun{f}{E}{E}$ which are \cadlag\ (so that in particular $\imf{f}{\infty-}:=\lim_{t\to\infty}\imf{f}{t}$ exists in $E$), such that $\imf{f}{\infty-}=\imf{f}{\infty}\in\set{0,\infty}$ and for which $\imf{f}{t}=0$ (resp. $=\infty$) implies that $\imf{f}{t+s}=0$ (resp. $=\infty$)  for all $s\geq 0$.

For any $f\in D$, first introduce the additive functional $\theta$ given by
\begin{esn}
\theta_t:=\int_0^t f(s)\, ds\in [0,\infty],
\end{esn}and let $\kappa$ denote the right-inverse of $\theta$ on $[0,\infty]$, given by
\begin{esn}
\kappa_t:=\inf\{u\ge 0: \theta_u >t\}\in [0,\infty]
\end{esn}using the convention $\inf\emptyset=\infty$. Define the \emph{Lamperti transformation}  $\fun{L}{D}{D}$ by\begin{esn}
\imf{L}{f}=f\circ\kappa
\end{esn}
where one remembers that $\imf{L}{f}(t)=f(\infty)$ if $\kappa_t=\infty$. 

Notice that  $0$ and $\infty$ indeed are also absorbing for $L(f)$. $L$ is a bijection of $D$. This can be checked by merely computing its inverse: setting $g=L(f)$ one rewrites $\kappa$ as
\begin{esn}
\kappa_t:=\int_0^t1/g(s)\, ds\in [0,\infty].
\end{esn}Then $f=g\circ \theta$, where $\theta$ is the right-inverse of $\kappa$.%Add a reference ????

It will always be implicit in what follows that a L\'evy process is a c\`adl\`ag process with independent and homogeneous increments, sent to $\infty$ at an independent exponential time, where it is understood that an exponential distribution with parameter zero means the distribution which assigns probability 1 to the value $\infty$. A \emph{spectrally positive} L\'evy process is a L\'evy process with no negative jumps. Recall (e.g. \cite{bertoinLevyP}) that the \emph{Laplace exponent} of a spectrally positive L\'evy process is a convex function $\Psi$ on $[0,\infty)$ satisfying
\begin{esn}
\imf{\se_x}{e^{-\lambda X_t}}=e^{-\lambda x +t\imf{\Psi}{\lambda}}\qquad t,x,\lambda\ge0.
\end{esn}
When $\Psi$ does not take positive values, $X$ is a.s. non-decreasing, and it is called a \emph{subordinator}.

If $\Psi(0)=0$, it is known that $X$ has infinite lifetime. If $q:=-\Psi(0)>0$, then it is easily seen that $X$ is the L\'evy process with Laplace exponent $q+\Psi$ killed at an independent exponential time with parameter $q$. Since in our setting, $X$ is set to $\infty$ after it is killed, we will consider that the killing time is the first jump with infinite size. This amounts to adding to the L\'evy measure a Dirac measure at $\{+\infty\}$ with mass $q$. \\
\\
Let us state the Lamperti representation theorem.

\begin{teo}[Lamperti representation of CSBPs \cite{csbp}]
\label{lampRep}
The Lamperti transformation is a bijection between continuous-state branching processes and L\'evy processes  with no negative jumps stopped whenever reaching zero. Specifically, for any CSBP $Z$, $L(Z)$ is a L\'evy process  with no negative jumps stopped whenever reaching zero; for any L\'evy process  with no negative jumps $X$ stopped whenever reaching zero, $L^{-1}(X)$ is a continuous-state branching process.
\end{teo}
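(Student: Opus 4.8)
The plan is to reduce everything to Laplace transforms and then to exploit one fundamental exponential martingale together with an optional-sampling argument across the random time change. First I would extract the exponential form of the transition function. Fixing $t$, the branching property makes $z\mapsto P_t(z,\cdot)$ a convolution semigroup on $E$, so each $P_t(z,\cdot)$ is infinitely divisible and
\[
\int_E e^{-\lambda y}\,P_t(z,dy)=e^{-z\,u_t(\lambda)},\qquad \lambda\ge 0,
\]
for a nonnegative kernel $u_t(\lambda)$ (with the convention $e^{-\lambda\infty}=0$ for $\lambda>0$ accounting for mass escaping to $\infty$). Chapman--Kolmogorov turns the semigroup identity into the flow relation $u_{t+s}=u_t\circ u_s$, and the \cadlag\ (equivalently stochastic-continuity) regularity upgrades this to a differentiable flow solving $\partial_t u_t(\lambda)=-\Psi(u_t(\lambda))$ with $u_0(\lambda)=\lambda$, where $\Psi(\lambda):=-\left.\partial_t u_t(\lambda)\right|_{t=0}$ is the candidate branching mechanism.

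Next I would introduce, for each $\lambda\ge 0$, the additive functional $\theta_t=\int_0^t Z_s\,ds$ and the process
\[
N^\lambda_t=\exp\!\left(-\lambda Z_t-\Psi(\lambda)\,\theta_t\right).
\]
A generator computation for the pair $(Z,\theta)$ --- using that the generator of $Z$ sends $e^{-\lambda z}$ to $z\,\Psi(\lambda)\,e^{-\lambda z}$ --- shows the drift of $N^\lambda$ vanishes, so $N^\lambda$ is a (local) martingale. The point of the Lamperti time change is that it trivialises this martingale: writing $X=L(Z)=Z\circ\kappa$ and using $\theta_{\kappa_t}=t$ on $\{\kappa_t<\infty\}$ gives
\[
N^\lambda_{\kappa_t}=\exp\!\left(-\lambda X_t-t\,\Psi(\lambda)\right),
\]
which is exactly the exponential martingale of a spectrally positive L\'evy process with exponent $\Psi$.

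The substance of the first half of the theorem is then an optional-sampling argument. The random times $\kappa_t$ are stopping times (right-inverses of the continuous additive functional $\theta$), so stopping $N^\lambda$ at $\kappa_t$ and $\kappa_{t+s}$ shows that $\exp(-\lambda X_t-t\Psi(\lambda))$ is a martingale in the time-changed filtration $\mathcal{G}_t=\mathcal{F}_{\kappa_t}$, for every $\lambda\ge 0$. Because $\Psi$ is convex with $\Psi(\lambda)\to+\infty$ (outside the subordinator case), I would first run this for $\lambda$ large enough that $N^\lambda\in[0,1]$ is bounded --- where optional sampling is immediate --- and then extend to all $\lambda\ge0$ by analyticity of $\lambda\mapsto \se(e^{-\lambda X_t})$. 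From the validity of this family of martingales one reads off that $X$ has stationary independent increments with the prescribed marginals, i.e. that $X$ is L\'evy with exponent $\Psi$; and the boundary bookkeeping ($\kappa_t=\infty$ once $\theta_\infty\le t$, with $Z_\infty\in\{0,\infty\}$) identifies the absorption of $Z$ at $0$ with $X$ being stopped on reaching $0$, and the escape of $Z$ to $\infty$ with the killing of $X$.

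I expect the genuine obstacle to be the absence of negative jumps, which does \emph{not} follow from finiteness of the Laplace transform alone (a process with bounded downward jumps can still have all negative exponential moments). The cleanest remedy is to establish the stochastic differential equation satisfied by the CSBP, in which the jump term is driven by a Poisson measure supported on $(0,\infty)$; time-changing that equation by $\kappa$ turns every $\int_0^\cdot Z_s(\cdots)$ into $\int_0^\cdot(\cdots)$ and exhibits $X$ directly as a spectrally positive L\'evy process, its jumps being precisely the (positive) jumps of $Z$. Equivalently one shows the L\'evy measure encoded in $\Psi$ is carried by $(0,\infty)$. Finally, for the converse I would run the whole scheme backwards: starting from a spectrally positive $X$ stopped at $0$ and setting $Z=L^{-1}(X)=X\circ\theta$, the time-changed exponential martingale yields $\se_z(e^{-\lambda Z_t})=e^{-z\,u_t(\lambda)}$, which is exactly the branching property, while the strong Markov property of $X$ transfers through the additive functional to make $Z$ Markov; combined with the fact that $L$ is already a bijection of $D$, this delivers the asserted bijection between the two classes.
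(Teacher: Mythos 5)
Your overall strategy for the forward direction --- the flow $u_{t+s}=u_t\circ u_s$, the ODE $\partial_t u_t=-\Psi(u_t)$, an exponential (local) martingale trivialised by the time change, and optional sampling at the stopping times $\kappa_t$ --- is essentially the route the paper takes (it works with the additive martingale $M^\lambda_t=e^{-\lambda Z_t}+F(\lambda)\int_0^t Z_se^{-\lambda Z_s}\,ds$ rather than your multiplicative $N^\lambda$, but these carry the same information). The genuine gap is at the step where you claim that from the family of time-changed martingales one ``reads off'' that $X$ is a L\'evy process with exponent $\Psi$. Because $Z$ may be absorbed at $0$ with $\theta_\infty=\int_0^\infty Z_s\,ds<\infty$, what optional sampling actually produces is the \emph{stopped} identity \eqref{explicitExponent},
\begin{esn}
\se_x\bigl(e^{-\lambda Y_t}\bigr)=e^{-\lambda x-F(\lambda)t}+\se_x\Bigl(\bigl(1-e^{-F(\lambda)(t-T_0)}\bigr)\indi{T_0\le t}\Bigr),
\end{esn}
i.e.\ a one-parameter family of constraints (along the curve $\lambda\mapsto(\lambda,\Psi(\lambda))$) on the \emph{two}-dimensional law of $(X_{t\wedge T_0},\,t\wedge T_0)$; this does not by itself pin that law down. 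The paper must separately identify the law of $T_0=\theta_\infty$: it shows this law is infinitely divisible in $x$ by the branching property of $\int_0^\infty Z_s\,ds$, writes $\se_x(e^{-\lambda T_0})=e^{-x\phi(\lambda)}$, and proves $-F\circ\phi=\mathrm{Id}$ via the martingale $K^\lambda_t=\exp(-\lambda\int_0^tZ_s\,ds-\phi(\lambda)Z_t)$, so that $\phi$ coincides with the known first-passage exponent of the L\'evy process with exponent $-F$ (whose existence is Proposition \ref{LaplaceExponent}). Your proposal has no counterpart of this step, so the ``boundary bookkeeping'' does not close. Your remedy for the absence of negative jumps is also circular as stated: one cannot ``establish the SDE satisfied by the CSBP'' with a jump measure supported on $(0,\infty)$ without already knowing the jump structure one is trying to prove. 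The paper instead gives a direct pathwise argument, splitting $Z$ into $n$ independent copies each kept below $\delta$ (hence, being nonnegative, unable to make a negative jump of size $\delta$) and using that independent Feller processes a.s.\ do not jump simultaneously.

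For the converse direction your sketch asserts the conclusion rather than deriving it: time-changing the exponential martingale of $X$ yields $\exp(-\lambda Z_t+\Psi(\lambda)\int_0^tZ_s\,ds)$, and extracting from this that $\se_z(e^{-\lambda Z_t})$ has the exponential-affine form $e^{-z\,u_t(\lambda)}$ --- which \emph{is} the branching property --- is precisely what needs proof; there is no a priori reason for the Laplace transform of the marginal to be multiplicative in the starting point. The paper's solution is to use the L\'evy--It\^o decomposition to show that $Z=L^{-1}(X)$ solves the stochastic equation \eqref{eqn : SDE}, to check that the sum of two independent solutions started from $x_1$ and $x_2$ is again a solution started from $x_1+x_2$ (by reassembling the driving Brownian motions and Poisson measures), and to invoke pathwise uniqueness. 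That additivity-of-solutions argument is where the real work of the converse lies, and it is absent from your proposal.
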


There are two natural strategies for a proof of this theorem.\\
\\
 The first strategy is based on generators, and consists in proving a relationship of the type $A_{Z}f(x)=xA_{X}f(x)$, where $A_{Z}$ is the local generator of $Z$ and $A_{X}$ is that of $X$. Starting either with a CSBP $Z$ or with a L\'evy process $X$, one characterizes the Laplace transforms of one-dimensional distributions of the other process to obtain one part of the theorem. The other part can be obtained proving that the Lamperti transformation is onto (the Laplace exponent of any L\'evy process/CSBP is attained). This method was hinted at by M.L. Silverstein \cite[p.1045]{silversteinTimeChange} in the preparatory discussion of Theorem 4, which states that Laplace transforms of CSBP are in one-to-one correspondence with Laplace exponents of spectrally positive L\'evy processes (see the forthcoming Proposition \ref{LaplaceExponent}). However, this discussion was not meant to be a proof, but was rather meant to guess the form of the aforementioned correspondence (which is proved by purely analytical arguments).

We wish to provide a proof of the Lamperti representation theorem in this vein, that we call `direct probabilistic proof'. Our goal is to emphasize the probabilistic rationale for the appearance of a spectrally positive L\'evy process when applying the Lamperti transformation to CSBPs and of CSBPs when applying the inverse Lamperti transformation to spectrally positive L\'evy processes. In particular, we do not wish to use analytical arguments to prove surjectivity. The study of the Lamperti transformation uses martingales (as a substitute for the delicate use of generators), and the inverse is analyzed in the spirit of \cite{dawsonLi} where stochastic differential equations are shown to be satisfied by affine processes, which make explicit in the special case of CSBPs.\\
\\
The second strategy is based on discrete approximations. In the case of Markov branching processes with integer values (discrete-state branching processes, or DSBPs), the Lamperti representation in terms of time-continuous random walks (with no negative jumps `larger' than $-1$) is nearly evident (see below). After rescaling, this yields a one-to-one correspondence between rescaled DSBPs and certain compound Poisson processes which are in the domain of attraction of spectrally positive L\'evy processes. The second ingredient, due to \cite{lampertiberkeley}, is the fact that all CSBPs are limits of rescaled DSBPs. The third ingredient is a necessary and sufficient condition for a sequence $(Y_n)$ of DSBPs to converge to a certain (CSBP) process $Y$. Such a condition is given by I.S. Helland in \cite{weakConvUnderTimeChange} (see \cite{grimvall} for the case of a sequence of Bienaym\'e--Galton--Watson processes), and proved to be equivalent to the convergence (in finite-dimensional distributions \emph{and} weakly in the Skorohod topology) of the sequence $X_n:=L(Y_n)$ to a spectrally positive L\'evy process $X$. If the convergence of $(Y_n)$ is strong enough so as to guarantee continuity of the Lamperti transform, then Theorem \ref{lampRep} follows. This difficult step is carried out in \cite{weakConvUnderTimeChange}, even in the explosive case, where $\infty$ can be reached continuously in finite time by the CSBP (but in the exception of the non-conservative case, where  $\infty$  can be reached by a jump from a finite state). More specifically, the Lamperti transformation is not continuous w.r.t. the usual Skorohod topology due to possible explosive cases. If explosive cases are excluded, one can proceed as in \cite[Ch. VI,IX]{ethierKurtz} using properties of the time-change transformation studied by Whitt in \cite{usefulFunctionsWhitt}. If even explosive (but conservative) cases are included, one can follow the work of Helland, introducing the (non-metrizable) Stone topology on our Skorohod space $D$. The Lamperti transformation is still not continuous under this topology, but if $(W_n)$ converges weakly to $W$ w.r.t. this topology, then under certain conditions on $W$ and the sequence $(W_n)$, there is convergence of finite-dimensional distributions of $L(W_n)$ to those of $L(W)$. This proves sufficient to achieve the proof of Theorem \ref{lampRep}.

We will provide a proof of the Lamperti representation theorem in the same vein, that we call `proof through weak convergence', not completing the proof of Helland by allowing the non-conservative case, but rather, introducing a new topology on Skorohod space which will make the Lamperti transformation continuous on $D$. 

 %Besides, remarks on his paper imply that the usual heuristics in favor of the Lamperti representation (as in \cite{bertoinMaphysto} and \cite{kyprianouBook}) are prone to problems since the Lamperti transformation is not continuous on Skorohod space. 
 
%On the last page of this work, one can read: `Once Theorem 6.1 is established, we can re-derive several previous results about CB-processes. For instance, it is immediate that every [CSBP] has a representation of the [Lamperti] form'. But this only proves one sense of the representation theorem and is actually not so immediate. Indeed, the case when the CSBP reaches $\infty$ in finite time must be handled carefully. We will not enter the details here, but this case can actually be resolved thanks to various results scattered throughout Helland's paper when $\infty$ is reached continuously in finite time (explosive case), and by some extra arguments in the case when $\infty$ is reached by a jump from a finite state (non-conservative case). 

\subsection{Outline of the two proofs}
Section \ref{timeChange} is dedicated to the direct probabilistic proof of Theorem \ref{lampRep}, and Section \ref{weakConv} to its proof through weak convergence.

Beforehand, we will recall well-known properties of CSBPs and sometimes sketch their proofs. %The forthcoming Proposition \ref{prop : prelim} states that the Laplace transform of the one-dimensional marginal $Z_t$ of a CSBP starting from $x$ can be written $\exp(-x \imf{u_t}{\lambda})$, where $u_t\circ u_s = u_{t+s}$. The last equality is called \emph{semigroup property} or \emph{composition rule}. 
%In Proposition \ref{differentiability}, we state that for any $\lambda$, the mapping $t\mapsto\imf{u_t}{\lambda}$ is differentiable, and the derivative at $t$ is  equal to $F(\imf{u_t}{\lambda})$. Proposition \ref{LaplaceExponent} states that $-F$ is the Laplace exponent of some spectrally positive L\'evy process. 

%for which we propose two proofs, one by analyzing a stochastic differential equation  satisfied by CSBPs and the other one by functional approximations of CSBPs by discrete space branching  processes, thereby extending Helland's work. 

%Regarding  Proposition \ref{differentiability}, we merely show how to simplify the argument to obtain differentiability. Silverstein's argument hinges on constructing a global solution to a certain differential equation. Helland has also obtained this by approximating the finite-dimensional distributions of CSBPs by those of discrete space branching  processes. 

\subsubsection{Proof through martingales and stochastic calculus}
Let us outline  Section \ref{timeChange}. First, we prove that in continuous time and continuous state-space (both conditions are needed), a branching process cannot have negative jumps. 
Then we show that if $\fund{e_\lambda}{z}{\imf{\exp}{-\lambda z}}$ for any $\lambda>0$, then there exists a function $F$ (the negative of the branching mechanism of Proposition \ref{differentiability}) such that
\begin{esn}
M^{\lambda}_t:=\imf{e_\lambda}{Z_t}+\imf{F}{\lambda}\int_0^t Z_s\imf{e_\lambda}{Z_s}\, ds
\end{esn}is a martingale. Applying the optional stopping theorem to the time change $\kappa_t$, we get a differential equation satisfied by the Laplace transform of the marginal of the image $Y$ of $Z$ by the Lamperti transformation.
Solving this differential equation yields an expression which is very close to that known for a L\'evy process, when it is not stopped upon reaching 0. The conclusive step consists in proving that $Y$ indeed is a L\'evy process stopped upon reaching 0.%, can be performed in two ways: using Proposition \ref{LaplaceExponent}, which ensures that $-F$ is the Laplace exponent of a spectrally positive L\'evy process, or starting the process $Y$ from arbitrarily large initial conditions. The former is shorter, but we give the latter because it emphasizes the probabilistic rationale for the appearance of the L\'evy process, without using the analytical properties of $F$ regarding infinitely divisible distributions (Proposition \ref{LaplaceExponent}).

For the second part of the theorem, we can use the L\'evy--It\^o decomposition for the initial L\'evy process. We start with any spectrally positive L\'evy process $X$ with initial position $x>0$, L\'evy measure $\Lambda$ and Gaussian coefficient $\sigma$. Using the L\'evy--It\^o decomposition, we show that the image $Z$ of the process $X$ stopped upon reaching 0, by the inverse Lamperti transformation, satisfies 
\begin{align}
\label{eqn : SIE}
Z_t
=x&+a\int_0^t Z_s\, ds + \sigma \int_0^t\sqrt{Z_s} \,dB_s\nonumber
\\&+\int_0^t\int_0^{Z_{s-}}\int_{[1,\infty]}rN(ds, dv, dr)+
\int_0^t\int_0^{Z_{s-}}\int_{(0,1)}r\tilde{N}(ds, dv, dr),
\end{align}
for some real number $a$, where $B$ is a Brownian motion and $N$ is an independent Poisson measure ($\tilde N$ is the associated compensated measure) with intensity measure $ds\,dv\,\Lambda(dr)$. It is then immediate to deduce the branching property. This stochastic equation is similar to the ones shown to be satisfied by affine processes in \cite[(5.1)]{dawsonLi}, \cite{bertoinLeGallFlowsII} and \cite[(9)]{bertoinLeGallFlowsIII}. The Poisson integral in equation (\ref{eqn : SIE}) has the following interpretation: the role of the second coordinate of the Poisson measure $N$ is to \emph{mark} jumps in order to have them occur only if this mark is `below' the path of $Z$; thus, the jumps with size in $(r, r+dr)$ occur at a rate equal to $Z_{t}\, \Lambda(dr)$, that is, as in the discrete case discussed below, the branching process jumps at a rate which is \emph{linear} in the population size. 

\subsubsection{Proof through weak convergence}The second proof (Section \ref{weakConv}) relies on the approximation of spectrally positive L\'evy processes by compound Poisson processes and of CSBPs by (time-continuous) discrete state-space branching processes, abbreviated as DSBPs. 
% This discrete model will highlight the need to add $\infty$ to the state-space of branching processes in continuous time. 

\begin{defi}
A \defin{discrete space branching process} $Z=(Z_t;t\ge 0)$   is a c\`adl\`ag Markov process with values in $\overline\na=\na\cup\set{\infty}$ (sent to $\infty$ after possible blow-up), which jumps from state $i$ to state $i+j$, $j=-1,1,2,\ldots$, at rate $i \mu_{j+1}$,  where $(\mu_k)_{k\ge 0}$ is a finite measure on $\overline\na$ with zero mass at 1.
\end{defi}
The integer $Z_t$ can be interpreted as the size at time $t$ of a population where each individual is independently replaced at constant rate $\lambda:=\sum_k\mu_k$ by a random quantity  of individuals, equal to $k$ with probability $\mu_k/\lambda$.  As a consequence, it is easily seen that $Z$ satisfies the branching property.

To explain the heuristics behind the Lamperti transformation (implicit in \cite{csbp, silversteinTimeChange} and also found in \cite{bertoinMaphysto, kyprianouBook}), let us note that for any state $i\not\in\{0,\infty\}$, the size of the jump of $Z$ starting from $i$ does not depend on $i$. %In that respect, it is very much like the jump size of a compound Poisson process with parameter $\lambda$ and jump distribution $\nu$ given by $\imf{\nu}{\set{j}}=\imf{\mu}{\set{j+1}}$, $j\ge -1$. The difference lies in the rate at which the two processes leave state $i$, which is constant equal to $\lambda$ for the compound Poisson process, but equal to $\lambda i$ for the DSBP. 
Thus, the jump chain of $Z$ is exactly that of the compound Poisson  process $X$ which goes from state $i$ to state $i+j$, $j=-1,1,2,\ldots$, at rate $ \mu_{j+1}$. The only difference between those two processes lies in the waiting times between two jumps. The Lamperti transformation is a random
time change that enables the paths of one process to be obtained from those of the other one by an appropriate modification of the waiting times. If $T_0=0$ and $T_1<T_2<\cdots$ are the successive jump times of $Z$, then the differences $\paren{T_i-T_{i-1}}_{i\geq 1}$ are conditionally independent given the successive states $\paren{Z_{T_i}}_{i\in\na}$ and conditionally on them, $T_i-T_{i-1}$ is exponential with parameter $\lambda Z_{T_{i-1}}$.
The important point is to notice that defining $Y$ as the Lamperti transform of $Z$ amounts to multiplying each waiting time  $T_i-T_{i-1}$ by $Z_{T_{i-1}}$; this turns the waiting time into an exponential variable with parameter $\lambda$, except when $Z_{T_{i-1}}=0$, since then the waiting time is infinite.
%we need only divide it by $Z_{T_{i-1}}$ when this quantity is nonzero. We can thus define a new process $Y$ whose successive jump times are $\sum_{j\leq i}\paren{T_j-T_{j-1}}/Z_{T_{j-1}}$ and which is equal to $Z_{T_{i-1}}$ before the $i$\nbd th jump; when $Z$ jumps to zero, we will let $Y$ jump to zero and then stay there. When $Y$ is in a state $i$ different from zero, $Y$ has total jump rate $\lambda$ and jumps to state $i+j$ with probability $\nu_{j}={\mu}_{j+1}$. 
Therefore, $Y$ is equal to the compound Poisson process $X$ with rate $\lambda$ and jump distribution $\mu_{1+\cdot}$ stopped upon reaching zero. Of course, a similar sketch of proof can be achieved for the other direction of the Lamperti transformation.

It turns out that general CSBPs can be approximated by DSBPs at the level of finite-dimensional distributions if and only if the corresponding Lamperti transforms of the latter approximate spectrally positive L\'evy processes stopped whenever reaching zero (analogous to previous work of Grimvall presented in \cite{grimvall}). Therefore, one could hope to prove the Lamperti representation of CSBPs by weak convergence considerations. This program would be carried out in a very simple manner if the Lamperti transformation were continuous on Skorohod space but unfortunately this is not the case (Helland first reported such a phenomenon in \cite{weakConvUnderTimeChange}).  We will therefore have to circumvent this problem by using properties of our approximations which ensure weak convergence in Skorohod space with a modified topology which makes the Lamperti transformation continuous and implies convergence in the usual Skorohod space.

%The Lamperti representation was proved by Helland in \cite{weakConvUnderTimeChange} by weak convergence considerations by  which allows one to deduce the convergence of finite-dimensional distributions of a sequence of Lamperti transforms of a convergent sequence on Skorohod space. 
%The aforementioned work of Grimvall is used in \cite{asterisqueDuquesneLeGall} to approximate the so-called L\'evy trees by Bienaym\'e--Galton--Watson trees defined in \cite{neveuarbres}. 

\subsection{Preliminary results}
\label{subsec : prelim results}
%From Definition \ref{def : CSBP}, we prove the following statement.
\begin{pro}
\label{prop : prelim}
 For a CSBP, both states $0$ and $\infty$ are absorbing, and for all 
$t,z\in(0,\infty)$,
\begin{esn}
\imf{P_t}{z,\set{0,\infty}}<1.
\end{esn} In addition, there is $\fun{u_t}{[0,\infty)}{[0,\infty)}$ such that
\begin{equation}
\label{silversteinDef}
\int_{[0,\infty]} e^{-\lambda z'}\, \imf{P_t}{z,dz'}=e^{-z\imf{u_t}{\lambda}}
\end{equation}
for $z\in [0,\infty]$ which satisfies the composition rule\begin{esn}
\imf{u_{t+s}}{\lambda}=\imf{u_t}{\imf{u_s}{\lambda}}. 
\end{esn}Finally, $\paren{P_t}_{t\geq 0}$ is a Feller semigroup.
\end{pro}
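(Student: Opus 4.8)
The plan is to read off every assertion from the branching property expressed through Laplace functionals. For $t\ge0$, $z\in E$ and $\lambda\ge0$, put
\begin{esn}
\phi_t(z,\lambda):=\int_{[0,\infty]}e^{-\lambda z'}\,\imf{P_t}{z,dz'},
\end{esn}
with the convention $e^{-\lambda z'}=0$ when $z'=\infty$ and $\lambda>0$. Taking Laplace transforms in $\imf{P_t}{z_1,\cdot}*\imf{P_t}{z_2,\cdot}=\imf{P_t}{z_1+z_2,\cdot}$ turns convolution into a product, so that for fixed $t,\lambda$ the map $z\mapsto\phi_t(z,\lambda)$ is a bounded measurable solution on $[0,\infty)$ of the multiplicative Cauchy equation $\imf g{z_1+z_2}=\imf g{z_1}\imf g{z_2}$. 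The only such solutions are $z\mapsto e^{-z\,u_t(\lambda)}$ with $u_t(\lambda)\in[0,\infty]$ (the value $+\infty$ giving the degenerate solution $\indi{\set0}$, ruled out below); this is \eqref{silversteinDef}, and since $\phi_t(z,\cdot)$ is nonincreasing, $u_t(\cdot)$ is nondecreasing.

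The composition rule is then Chapman--Kolmogorov in disguise: from $\imf{P_{t+s}}{z,\cdot}=\int\imf{P_t}{z,dy}\,\imf{P_s}{y,\cdot}$ one gets $\phi_{t+s}(z,\lambda)=\int\imf{P_t}{z,dy}\,e^{-y\,\imf{u_s}{\lambda}}=\phi_t\paren{z,\imf{u_s}{\lambda}}=e^{-z\,\imf{u_t}{\imf{u_s}{\lambda}}}$, and comparison with $e^{-z\,\imf{u_{t+s}}{\lambda}}$ yields $\imf{u_{t+s}}{\lambda}=\imf{u_t}{\imf{u_s}{\lambda}}$. Absorption at $0$ is equally formal: taking $z=0$ in the Cauchy equation gives $\phi_t(0,\lambda)\in\set{0,1}$, and being nonincreasing and continuous in $\lambda$ with value $1$ at $\lambda=0$ (conservativity), it is identically $1$; hence $\int e^{-\lambda z'}\,\imf{P_t}{0,dz'}=1$ for all $\lambda>0$, forcing $\imf{P_t}{0,\cdot}=\delta_0$.

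The substantive point --- and the step I expect to be the main obstacle, because it is where path regularity rather than the algebra of branching must enter --- is the non-degeneracy $\imf{P_t}{z,\set{0,\infty}}<1$ for $z\in(0,\infty)$, which simultaneously forces $0<u_t(\lambda)<\infty$ for $\lambda\in(0,\infty)$. Note $\imf{P_t}{z,\set0}=\lim_{\lambda\to\infty}\phi_t(z,\lambda)=e^{-z\,u_t(\infty)}$ and $1-\imf{P_t}{z,\set\infty}=\lim_{\lambda\to0+}\phi_t(z,\lambda)=e^{-z\,u_t(0+)}$, so a degeneracy at a fixed $t$ means extinction ($\imf{P_t}{z,\set0}=1$) or explosion ($\imf{P_t}{z,\set\infty}=1$) almost surely. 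I rule these out by halving. If $\imf{P_t}{z,\set0}=1$, the Markov property gives $1=\imf{\se_z}{\imf{P_{t/2}}{Z_{t/2},\set0}}=\imf{\se_z}{e^{-Z_{t/2}\,u_{t/2}(\infty)}}$; since $y\mapsto e^{-y\,u_{t/2}(\infty)}\le1$ with equality only at $y=0$ (the alternative $u_{t/2}(\infty)=0$ itself meaning $Z_{t/2}=0$ almost surely), this forces $Z_{t/2}=0$ almost surely, and iterating, $Z_{t/2^n}=0$ almost surely for every $n$, contradicting right-continuity, which makes $Z_{t/2^n}\to Z_0=z>0$. The identical argument rules out explosion. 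Having excluded extinction we get $u_t(\lambda)>0$ for some $\lambda$, whereupon the relation $\phi_t(\infty,\lambda)=\phi_t(z,\lambda)\,\phi_t(\infty,\lambda)$ (the convention $z+\infty=\infty$ inserted into the branching identity) forces $\phi_t(\infty,\lambda)=0$, i.e. $\imf{P_t}{\infty,\cdot}=\delta_\infty$, so $\infty$ is absorbing as well.

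It remains to prove the Feller property on the compact space $E$. Since $w:=e^{-z}$ maps $E$ homeomorphically onto $[0,1]$ and $e^{-\lambda z}=w^\lambda$, Stone--Weierstrass shows that linear combinations of the functions $z\mapsto e^{-\lambda z}$, $\lambda\ge0$, are dense in $C(E)$, so it suffices to verify the two Feller requirements on these generators. Stability, i.e. that $z\mapsto\phi_t(z,\lambda)=e^{-z\,u_t(\lambda)}$ lies in $C(E)$, holds precisely because $0<u_t(\lambda)<\infty$: finiteness gives continuity on $[0,\infty)$, while positivity makes $e^{-z\,u_t(\lambda)}\to0=\phi_t(\infty,\lambda)$ as $z\to\infty$, so there is no discontinuity at $\infty$. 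Strong continuity follows from $\imf{u_t}{\lambda}\to\lambda$ as $t\to0$ --- a restatement of $\imf{P_t}{z,\cdot}\Rightarrow\delta_z$, i.e. of right-continuity of paths --- which upgrades on the compact $E$ to $\sup_{z\in E}\abs{e^{-z\,u_t(\lambda)}-e^{-\lambda z}}\to0$; a contraction-plus-density argument then extends $\sup_{z\in E}\abs{\imf{P_tf}{z}-\imf{f}{z}}\to0$ from the generators to all of $C(E)$.
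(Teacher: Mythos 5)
Your proof is correct in substance and considerably more detailed than the paper's, which disposes of everything except the Feller property in one sentence (``the absorbing character of $0$ and $\infty$ is easily handled, the composition rule follows from the Markov property'') and, for the Feller property, invokes the extended continuity theorem for Laplace transforms: the c\`adl\`ag paths give continuity of $t\mapsto\imf{u_t}{\lambda}$ at $0$ where it equals $\lambda$, the composition rule propagates continuity everywhere, and the continuity theorem applied to \eqref{silversteinDef} yields both that $P_tf$ is continuous for continuous $f$ and that $P_tf\to f$ pointwise. Your route to Feller --- Stone--Weierstrass density of the linear span of the $e_\lambda$ in $C(E)$, stability of these generators under $P_t$ thanks to $0<\imf{u_t}{\lambda}<\infty$, and a contraction-plus-density extension --- is a genuinely different and equally standard mechanism, with the advantage of making explicit exactly where the non-degeneracy enters. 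Your halving argument for $\imf{P_t}{z,\set{0,\infty}}<1$ is also the right idea, and you correctly identify it as the one place where path regularity (rather than the algebra of branching) must be used.

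One justification needs repair. You deduce $\phi_t(0,\lambda)\equiv 1$ from the claim that $\lambda\mapsto\phi_t(0,\lambda)$ is nonincreasing, continuous, and equal to $1$ at $\lambda=0$. But with your convention ($e^{-\lambda z'}=0$ for $z'=\infty$, $\lambda>0$, while $e^{-0\cdot\infty}=1$) one has $\lim_{\lambda\to 0+}\phi_t(0,\lambda)=\imf{P_t}{0,[0,\infty)}=1-\imf{P_t}{0,\set{\infty}}$, so continuity at $\lambda=0$ is exactly the statement $\imf{P_t}{0,\set{\infty}}=0$ that you are in the course of proving; conservativity here only asserts $\imf{P_t}{z,E}=1$ on $E=[0,\infty]$ and does not exclude $\imf{P_t}{0,\cdot}=\delta_\infty$. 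The conclusion is nevertheless immediate from material you already have: the Cauchy equation with $z+0=z$ gives $\phi_t(z,\lambda)=\phi_t(z,\lambda)\,\phi_t(0,\lambda)$, and once your halving argument has excluded almost sure explosion, $\phi_t(z,\lambda)>0$ for finite $z$, which forces $\phi_t(0,\lambda)=1$. (Alternatively, right-continuity of paths at $t=0$ under $\p_0$ rules out $\imf{P_t}{0,\cdot}=\delta_\infty$ directly.) With the non-degeneracy step moved ahead of the absorption step, your argument is complete.
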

\begin{proof} The absorbing character of $0$ and $\infty$ is easily handled, the composition rule follows from the Markov property, while the Feller character can be dealt as in \cite[Lemma 2.2]{lampertiberkeley}. Indeed, the \cadlag\ character of the trajectories implies that $t\mapsto \imf{u_t}{\lambda}$ is continuous at zero, where it is equal to $\lambda$, and so the composition rule gives us continuity everywhere. The extended continuity theorem for Laplace transforms applied to \eqref{silversteinDef} implies that $P_tf$ is continuous whenever $f$ is (because the restriction of $f$ to $[0,\infty)$ would be continuous and bounded) and that it tends to $f$ pointwise as $t\to 0$. 
\end{proof}

We now provide further properties of $u_t$.% They are proved by Silverstein in \cite{silversteinTimeChange}. 

%Some remarks on killed spectrally positive L\'evy processes are in order. They are obtained by killing a spectrally positive L\'evy process at an independen exponential time (of parameter $d$, say) and sending the process to $\infty$. It follows that if $X$ is such a process, then
%\begin{esn}
%\esp{e^{-\lambda X_t}}=e^{-dt-t\imf{\Psi}{\lambda}},
%\end{esn}
%where $\Psi$ is the Laplace exponent of the L\'evy process that wasn't killed. Note that the distribution of $X_t$ is infinitely divisible on $(-\infty,\infty]$ in the sense that for each $n\geq 1$ it can be written down as the distribution of the sum of $n$ independent variables with values in $(-\infty,\infty]$; conversely every infinitely divisible distribution $\mu$ on $(-\infty,\infty]$ is the distribution at time $1$ of a killed L\'evy process as can be seen since the conditioned measure $\imf{\mu}{\cdot\cap (-\infty,\infty)}/\imf{\mu}{(-\infty,\infty)}$ is infinitely divisible on $(-\infty,\infty)$. The effect of killing can also be expressed in terms of the addition of (finite) mass at $\infty$ to a L\'evy measure on $(0,\infty)$. 

\begin{pro}
\label{differentiability}
For every $\lambda>0$, the function $t\mapsto \imf{u_t}{\lambda}$ is differentiable on $[0,\infty)$. Moreover,\begin{esn}
\frac{\partial\imf{u_{t}}{\lambda}}{\partial t}= F(\imf{u_t}{\lambda})\qquad t,\lambda\ge 0
\end{esn}where\begin{esn}
\imf{F}{\lambda}:=\left.\frac{\partial \imf{u_t}{\lambda}}{\partial t}\right|_{t=0} .
\end{esn}

The function $\Psi:=-F$ is called the \defin{branching mechanism} of $Z$.
\end{pro}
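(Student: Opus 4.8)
The plan is to treat the composition rule $u_{t+s}(\lambda)=u_t(u_s(\lambda))=u_s(u_t(\lambda))$ from Proposition \ref{prop : prelim} as the engine driving everything in the \emph{time} variable, and to extract all the regularity in the \emph{spatial} variable $\lambda$ directly from the Laplace-transform representation \eqref{silversteinDef}. For each fixed $t$ the map $\lambda\mapsto u_t(\lambda)$ is continuous and, since $P_t(z,\{0,\infty\})<1$, strictly increasing; moreover it is \emph{concave}. Indeed, writing $\Phi(\lambda)=\int e^{-\lambda z'}\,P_t(z,dz')=e^{-zu_t(\lambda)}$, the function $\Phi$ is a Laplace transform, hence log-convex, so that $-z\,u_t(\lambda)=\log\Phi(\lambda)$ is convex and $u_t(\lambda)$ is concave in $\lambda$. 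Combined with the continuity in $t$ already recorded in Proposition \ref{prop : prelim}, this yields joint continuity of $(t,\lambda)\mapsto u_t(\lambda)$.

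Next I would introduce the candidate vector field as a right-hand derivative at the origin,
\[
F(\lambda):=\lim_{s\downarrow0}\frac{u_s(\lambda)-\lambda}{s},
\]
and reduce the proposition to showing that this limit exists for every $\lambda$ and defines a continuous function, after which the composition rule finishes the argument. One first seeds existence on a dense set: for fixed $\lambda$ the orbit $t\mapsto u_t(\lambda)$ is monotone — this follows from the order-preserving character of $u_s$ together with the composition rule — hence differentiable at Lebesgue-almost every $t$, and at any differentiability point $t_0$ the identity $u_{t_0+s}(\lambda)=u_s(u_{t_0}(\lambda))$ shows that the limit defining $F$ exists at the point $u_{t_0}(\lambda)$. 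Letting $t_0\downarrow0$ and using continuity, every $\lambda$ is a limit of points at which $F$ is defined, so the domain of $F$ is dense in $(0,\infty)$.

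The crux is to upgrade this to existence everywhere together with continuity, and here the concavity in $\lambda$ is the decisive ingredient: each difference quotient $g_s(\lambda):=(u_s(\lambda)-\lambda)/s$ is concave (a concave function minus a linear one), and a family of concave functions that converges pointwise on a dense subset of an open interval to a finite limit converges at every point, locally uniformly, to a concave — in particular continuous — limit. Thus $F$ is defined on all of $(0,\infty)$ and is continuous (indeed concave, consistently with $\Psi=-F$ being a convex Laplace exponent).

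Finally I would propagate and conclude. For every $t$ the composition rule gives the right derivative
\[
\lim_{s\downarrow0}\frac{u_{t+s}(\lambda)-u_t(\lambda)}{s}=\lim_{s\downarrow0}\frac{u_s(u_t(\lambda))-u_t(\lambda)}{s}=F(u_t(\lambda)),
\]
which, being the composition of the continuous function $F$ with the continuous map $t\mapsto u_t(\lambda)$, is itself continuous in $t$. Since a continuous function whose right derivative exists everywhere and is continuous is automatically of class $C^1$, the map $t\mapsto u_t(\lambda)$ is differentiable with $\partial_t u_t(\lambda)=F(u_t(\lambda))$, and evaluating at $t=0$ identifies $F$ as in the statement. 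I expect the monotonicity of $t\mapsto u_t(\lambda)$ (needed only to seed almost-everywhere differentiability) and, above all, the concavity-based passage from a dense set to the whole half-line to be the two steps demanding the most care; everything else is bookkeeping with the composition rule.
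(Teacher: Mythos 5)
Your argument is correct, but it follows a genuinely different route from the paper's. The paper exploits smoothness rather than concavity in $\lambda$: by dominated convergence $\lambda\mapsto u_t(\lambda)$ is $C^\infty$ and strictly increasing, so the mean value theorem turns the composition rule into $u_{t+h}(\lambda)-u_t(\lambda)=\partial_\lambda u_t(\lambda')\,\bigl(u_h(\lambda)-\lambda\bigr)$; since for a \emph{fixed} spacing $h$ all increments then carry the same sign, the variation over equally spaced partitions telescopes to $|u_t(\lambda)-\lambda|$, giving finite variation (hence a.e.\ differentiability) without ever needing global monotonicity of the orbit. Dividing the same identity by $u_h(\lambda)-\lambda$ and letting $h\downarrow0$ at one differentiability point then yields existence of $F(\lambda)$ at \emph{every} $\lambda$ directly, together with the by-product $\partial_t u_t(\lambda)=\partial_\lambda u_t(\lambda)\cdot F(\lambda)$, i.e.\ \eqref{eqn : plein de derivees partielles}, which the paper reuses in the martingale computation of Section~\ref{timeChange}; your route does not produce that identity and you would have to rederive it there. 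In exchange, your concavity argument (log-convexity of Laplace transforms, plus the classical fact that concave functions converging pointwise on a dense set converge locally uniformly everywhere) buys continuity — indeed concavity — of $F$ for free, and your closing lemma (continuous right derivative everywhere implies $C^1$) is a clean substitute for the paper's final passage to \eqref{eqn : magic}. Both endgames are sound.

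One step you should not leave as an assertion: monotonicity of $t\mapsto u_t(\lambda)$ does \emph{not} follow from order-preservation and the composition rule alone — those only give that $u_{t+h}(\lambda)-u_t(\lambda)$ has the sign of $u_h(\lambda)-\lambda$, and a priori that sign could depend on $h$. You must also invoke continuity in $t$: if $u_h(\lambda)>\lambda$ for some $h$, then $u_{h/2}(\lambda)>\lambda$ (else $u_h(\lambda)=u_{h/2}\circ u_{h/2}(\lambda)\le\lambda$), hence by iteration $u_{kh/2^n}(\lambda)\ge\lambda$ on a dense set of times, and continuity extends this to all times, forcing a single sign. Alternatively, you can drop monotonicity altogether and borrow the fixed-spacing finite-variation trick above, which delivers the a.e.\ differentiability you need with less effort. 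With that repair the proof is complete.
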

 This last result was proved in \cite{silversteinTimeChange}, using a delicate analytical proof, so we prefer to provide an elementary proof resting mainly on the composition rule. An only more slightly complicated argument found in \cite[Lemma 1, Chap V.2, p.413]{GikhmanSkorohod} enables one to generalize the above proposition to stochastically continuous multi-type continuous-state branching processes.
\begin{proof}
In this proof, we exclude the trivial case where $Z$ is a.s. constant, so that $\imf{u_t}{\lambda}\not= \lambda$ unless $t=0$.
First note that by a recursive application of the dominated convergence theorem, $\lambda\mapsto\imf{u_t}{\lambda}$ is infinitely differentiable in $(0,\infty)$ and strictly increasing.  Next observe that the Feller property of $Z$ gives the continuity of $t\mapsto \imf{u_t}{\lambda}$.

%Next observe that by dominated convergence again,
%\begin{esn}
%\lim_{t\downarrow 0}\imf{\se_x}{\exp-\lambda Z_t} =\exp(-\lambda x),
%\end{esn}
%that is, $e^{-x\imf{u_t}{\lambda}}$ converges to $e^{-{\lambda}x}$ as $t\downarrow 0$, so that $t\mapsto \imf{u_t}{\lambda}$ is right-continuous at $0$, and
%\begin{esn}
%\lim_{t\downarrow 0}\imf{u_t}{\lambda}=\imf{u_0}{\lambda}=\lambda\qquad \lambda\ge 0.
%\end{esn}Furthermore, the composition rule $u_{t+h}=u_t\circ u_h$ and the continuity of  $\lambda\mapsto u_t(\lambda)$ entails the right-continuity  of the function $t\mapsto u_t(\lambda)$ everywhere. The left-continuity stems from another application of the composition rule $u_{t}=u_{t-h}\circ u_h$.

By the composition rule again,  we may write
\begin{equation}
\label{expInDomainForZ}
\imf{u_{t+h}}{\lambda}-\imf{u_t}{\lambda}
=\imf{u_{t}}{\imf{u_h}{\lambda}}-\imf{u_t}{\lambda}
=\imf{\frac{\partial \imf{u_t}{\lambda}}{\partial \lambda}}{\lambda'}\paren{\imf{u_h}{\lambda}-\lambda}.
\end{equation}for some $\lambda'\in[\lambda,\imf{u_h}{\lambda}]$.
Hence the increment $\imf{u_{t+h}}{\lambda}-\imf{u_t}{\lambda}$ has the same sign as $\imf{u_h}{\lambda}-\lambda$ and so, for equally-spaced partitions $\set{t_i}_{i}$ of $[0,t]$ with spacing $h$, we have:
\begin{esn}
\sum_{i}\abs{\imf{u_{t_{i+1}}}{\lambda}-\imf{u_{t_i}}{\lambda}}
=\mbox{sign}(\imf{u_h}{\lambda}-\lambda)\sum_{i}(\imf{u_{t_{i+1}}}{\lambda}-\imf{u_{t_i}}{\lambda})
=\abs{\imf{u_t}{\lambda}-\lambda}.
\end{esn}
We deduce that $t\mapsto \imf{u_t}{\lambda}$ has \emph{finite variation}  and hence, it is almost everywhere differentiable. Now thanks to \eqref{expInDomainForZ},  
\begin{esn}
\lim_{h\downarrow 0}
\frac{\imf{u_{t+h}}{\lambda}-\imf{u_t}{\lambda}}{\imf{u_{h}}{\lambda}-\lambda}= \frac{\partial \imf{u_t}{\lambda}}{\partial \lambda}
\end{esn}where the r.h.s. is nonzero, so choosing $t$ where $t\mapsto \imf{u_t}{\lambda}$ is differentiable, its right-derivative exists at $0$. This, along with the last display now yields the differentiability everywhere, as well as the following equality
\begin{equation}
 \label{eqn : plein de derivees partielles}
\frac{\partial\imf{u_{t}}{\lambda}}{\partial t}= \frac{\partial \imf{u_t}{\lambda}}{\partial \lambda}\cdot F(\lambda),
\end{equation}
where we have set
 \begin{esn}
\imf{F}{\lambda}:=\left.\frac{\partial \imf{u_t}{\lambda}}{\partial t}\right|_{t=0}.
\end{esn}Letting $h\downarrow 0$ in $\paren{u_h\circ u_t(\lambda)-u_t(\lambda)}/h$, we finally get
\begin{equation}
\label{eqn : magic}
\frac{\partial\imf{u_{t}}{\lambda}}{\partial t}= F(\imf{u_t}{\lambda}),
\end{equation}
which ends the proof.
\end{proof}

\begin{pro}
\label{LaplaceExponent}
The branching mechanism $\Psi$ is the Laplace exponent of a spectrally positive L\'evy process.
\end{pro}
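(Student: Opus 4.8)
The plan is to exhibit the L\'evy--Khintchine data of $\Psi=-F$ by verifying the two properties that characterise the Laplace exponents of spectrally positive L\'evy processes among functions that are finite on $(0,\infty)$ (which $\Psi$ is, by Proposition \ref{differentiability}): that $\Psi''$ is completely monotone on $(0,\infty)$, and that $\imf{\Psi}{0+}\in(-\infty,0]$. Once these hold, Bernstein's theorem gives $\imf{\Psi''}{\lambda}=\int_{[0,\infty)}e^{-\lambda r}\,m(dr)$ for a unique measure $m$, and integrating twice, collecting the constants of integration into $a\in\mathbb{R}$ and $-q$, recovers
\begin{esn}
\imf{\Psi}{\lambda}=-q+a\lambda+\tfrac12\sigma^2\lambda^2+\int_{(0,\infty)}\paren{e^{-\lambda r}-1+\lambda r\,\indi{r<1}}\,\Lambda(dr),
\end{esn}
with $q=-\imf{\Psi}{0+}\geq 0$, $\sigma^2=m(\set{0})\geq 0$ and $\Lambda(dr)=r^{-2}m(dr)$ on $(0,\infty)$; this is exactly the Laplace exponent of a spectrally positive L\'evy process (see \cite{bertoinLevyP}). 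Crucially, the finiteness of $\imf{\Psi}{0+}$ is what guarantees $\int_{(1,\infty)}\Lambda(dr)=\int_{(1,\infty)}r^{-2}\,m(dr)<\infty$, i.e. that $\Lambda$ is a genuine L\'evy measure.

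The first input is infinite divisibility. For each fixed $t\ge 0$, the branching property makes $z\mapsto \imf{P_t}{z,\cdot}$ a convolution semigroup of probability measures on $E$, with Laplace transform $e^{-z\imf{u_t}{\lambda}}$ by \eqref{silversteinDef}. Hence $u_t$ is the Laplace exponent of a (possibly killed) subordinator, that is, a Bernstein function
\begin{esn}
\imf{u_t}{\lambda}=q_t+b_t\lambda+\int_{(0,\infty)}\paren{1-e^{-\lambda r}}\,\nu_t(dr),
\end{esn}
where $q_t=\imf{u_t}{0+}\ge 0$, $b_t\ge 0$ and $\int_{(0,\infty)}(1\wedge r)\,\nu_t(dr)<\infty$. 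Consequently $u_t'$ is completely monotone, and so is $-u_t''(\lambda)=\int_{(0,\infty)}r^2e^{-\lambda r}\,\nu_t(dr)$, being the Laplace transform of the measure $r^2\nu_t(dr)$.

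Letting $t\downarrow 0$ is the crux. By Proposition \ref{differentiability}, $\imf{F}{\lambda}=\lim_{t\downarrow0}\paren{\imf{u_t}{\lambda}-\lambda}/t$ exists and is finite, so the convex functions
\begin{esn}
\frac{\lambda-\imf{u_t}{\lambda}}{t}=-\frac{q_t}{t}+\frac{1-b_t}{t}\lambda-\int_{(0,\infty)}\paren{1-e^{-\lambda r}}\frac{\nu_t(dr)}{t},
\end{esn}
whose second derivatives $-u_t''/t$ are completely monotone, converge pointwise to $\Psi$. I would show that the measures $t^{-1}r^2\nu_t(dr)$ converge vaguely on $[0,\infty)$ to a finite limit $m$ (mass at $0$ producing $\sigma^2$, mass on $(0,\infty)$ producing $\Lambda(dr)=r^{-2}m(dr)$), so that, by the extended continuity theorem for Laplace transforms already invoked in Proposition \ref{prop : prelim}, $-u_t''(\lambda)/t\to\int e^{-\lambda r}\,m(dr)=\imf{\Psi''}{\lambda}$, which is then completely monotone. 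Since $q_t=\imf{u_t}{0+}\ge0$, we automatically obtain $\imf{\Psi}{0+}=-\lim_{t\downarrow0}q_t/t\le0$, while its finiteness---equivalently $\int_{(1,\infty)}\Lambda(dr)<\infty$---reflects the finiteness of the explosion rate of the CSBP, itself read off from $\imf{F}{0+}>-\infty$.

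The main obstacle is exactly this limiting extraction, which is the analysis underlying convergence to infinitely divisible laws: upgrading the pointwise convergence of the affine-minus-Bernstein functions above to the vague convergence of $t^{-1}r^2\nu_t(dr)$, and ruling out escape of mass both towards $r=0$ (where surviving mass becomes the Gaussian coefficient, and must not be lost) and towards $r=\infty$ (where surviving mass would violate $\int_{(1,\infty)}\Lambda(dr)<\infty$). The completely monotone structure of $-u_t''/t$ is what makes the continuity theorem applicable and what forces the limit to be a Laplace transform; the finiteness of $\Psi$ on $(0,\infty)$ then controls the small-$r$ end and the finiteness of $\imf{\Psi}{0+}$ the large-$r$ end. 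With these in hand the L\'evy--Khintchine data are identified, yielding the claim.
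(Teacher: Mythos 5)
Your strategy is the Silverstein-style analytic route (show $\Psi''$ is completely monotone and $\imf{\Psi}{0+}\in(-\infty,0]$, then invoke Bernstein's theorem to read off the L\'evy--Khintchine triplet), which is genuinely different from the paper's argument: the paper works directly with the infinitely divisible probability laws $G_\eps$ on $(-\infty,\infty]$ whose log-Laplace transforms are $\paren{\lambda-\imf{u_\eps}{\lambda}}/\eps$, extracts a vague limit by Helly--Bray, rules out mass at $-\infty$ by Fatou, upgrades to convergence of Laplace transforms by uniform integrability, and concludes spectral positivity from Kallenberg's convergence criterion for infinitely divisible laws. Your route is viable in principle, but as written it contains a genuine gap, and it is precisely the step you yourself flag as ``the main obstacle'': the vague convergence of the measures $t^{-1}r^2\nu_t(dr)$ to a finite limit $m$ with no escape of mass, equivalently the pointwise convergence of $-u_t''/t$ to $\Psi''$. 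You announce this (``I would show\dots'') but give no argument. It does not come for free: pointwise convergence of the convex functions $\paren{\lambda-\imf{u_t}{\lambda}}/t$ to $\Psi$ does \emph{not} imply convergence of their second derivatives, and the extended continuity theorem can only be applied to $-u_t''/t$ \emph{after} one knows these Laplace transforms converge pointwise (or that the underlying measures are vaguely relatively compact with no loss of mass at $0$ or $\infty$). Establishing that compactness is exactly the analytic content of the proposition; in the paper it is carried by the Helly--Bray/Fatou/uniform-integrability block, and in your write-up nothing plays that role.

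A second, smaller gap is the finiteness of $\imf{\Psi}{0+}$, i.e.\ $\int_{(1,\infty)}\imf{\Lambda}{dr}<\infty$ together with a finite killing rate $q$. You assert this is ``read off from $\imf{F}{0+}>-\infty$'' (you presumably mean $\imf{F}{0+}<+\infty$, since $\Psi=-F$), but Proposition \ref{differentiability} only gives finiteness of $\imf{F}{\lambda}$ for $\lambda>0$; the behaviour as $\lambda\downarrow 0$ is exactly what must be controlled, since $u_t$ itself is discontinuous at $0$ in the non-conservative case ($\imf{u_t}{0}=0$ while $\imf{u_t}{0+}=q_t>0$). In the paper's setup this issue disappears automatically because $\Psi$ is identified as the log-Laplace transform of a genuine probability measure $G$ on $(-\infty,\infty]$, so $\imf{\Psi}{0+}=\log\paren{1-\imf{G}{\set{+\infty}}}>-\infty$; in yours it requires a separate argument. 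To make your proof complete you would need to supply the compactness step (for instance by mimicking the paper's treatment at the level of the measures $t^{-1}\paren{1\wedge r^2}\nu_t(dr)$) and the control of $q_t/t$ as $t\downarrow 0$.
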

This last proposition can  be found in \cite{silversteinTimeChange}, where  it is proved by analytical methods relying on completely monotone functions. Silverstein uses this proposition to prove uniqueness of solutions to the differential equation in Proposition \ref{differentiability}, which we only need in the proof by weak convergence and offer a simple argument for it. He additionally proves that any Laplace exponent of a killed spectrally positive L\'evy process can occur; we obtain this as a consequence of our approach to Theorem \ref{lampRep}. 
It is also proved in \cite{KawazuWatanabe}, where it is deduced mainly from It\^o's formula. We will rely on the convergence criteria for infinitely divisible probability measures as found in \cite[Thm. 15.14, p.295]{kallenberg}. 
\begin{proof}
Since for every $x\geq 0$, $\lambda\mapsto e^{-x\imf{u_t}{\lambda}}$ is the Laplace transform of a probability measure (on $[0,\infty]$) then $\lambda\mapsto \imf{u_t}{\lambda}$ is the Laplace exponent of a subordinator. Recalling that the Laplace exponent of a subordinator is minus its Laplace exponent as a spectrally positive L\'evy process (cf. \cite{bertoinLevyP}), it follows that for every $\eps>0$,\begin{equation}
\label{aproxEL}
\lambda\mapsto \paren{\lambda-\imf{u_\eps}{\lambda}}/\eps
\end{equation}is the Laplace exponent of a spectrally positive L\'evy process whose limit as $\eps\to 0+$, $\Psi$, is then the Laplace exponent of a spectrally positive L\'evy process.  Indeed, letting $G_{\eps}$ is the (infinitely divisible) law on $(-\infty,\infty]$ whose Laplace exponent  is \eqref{aproxEL}, the Helly-Bray theorem gives us a subsequence $\eps_k\to 0$ for which $G_{\eps_k}$ converges to an increasing \cadlag\ function $G$; we can interpret $G$ as the distribution of a probability measure $\mu$ on $[-\infty,\infty]$. To see that it doesn't charge $-\infty$, we use Fatou's lemma for convergence in law:\begin{esn}
\int e^{-\lambda x} \,\imf{G}{dx}\leq \liminf_{k\to\infty}\int e^{-\lambda x}\, \imf{G_{\eps_k}}{dx}\to e^{\imf{\Psi}{\lambda}}<\infty.
\end{esn}Actually, $\Psi$ is the log-Laplace transform of $G$: by the convergence in the preceding display we get\begin{esn}
\sup_k\int e^{-\lambda x} \imf{G_{\eps_k}}{dx}<\infty
\end{esn}for all $\lambda\geq 0$. Since for $\lambda'>\lambda\geq 0$, $e^{-\lambda' y}=\paren{e^{-\lambda y}}
^{\lambda'/\lambda}$, the $L_p$ criterion for uniform integrability implies that\begin{esn}
\int e^{-\lambda y}\,\imf{G_{\eps_k}}{dy}\to_{k\to\infty}\int e^{-\lambda y}\,\imf{G}{dy}
\end{esn}and so\begin{esn}
\int e^{-\lambda y}\,\imf{G}{dy}=e^{-\imf{F}{\lambda}}.
\end{esn}The same argument, when applied to $\lambda\mapsto \paren{\lambda-\imf{u_{t\eps}}{\lambda}}/\eps$, tells us that $t\Psi$ is the log-Laplace of a probability measure on $(-\infty,\infty]$, so that $G$ is infinitely divisible. The fact that its L\'evy measure does not charge $(-\infty,0)$ is deduced from \cite[Thm. 15.14, p.295]{kallenberg}.
\end{proof}

\section{Direct probabilistic proof}
\label{timeChange}

\subsection{The Lamperti transform of a CSBP}

%Here, we prove the direct sense of the Lamperti representation theorem, that is, that the Lamperti transformation applied to a CSBP yields a spectrally positive L\'evy process killed at an independent exponential time and stopped when (and if) it reaches zero. 
Let $Z$ denote a CSBP and $\p_x$ its law when it starts at $x\in [0,\infty]$. First, we prove that $Z$ cannot have negative jumps. Fix $\delta>0$ and set
$$
J_\delta:=\inf\{t>0:Z_t-Z_{t-}<-\delta\}.
$$
Now let $n$ be any integer such that $x/n<\delta$ and let $(Z^{(i,j)};i\ge 1, j=1,\ldots,n)$ be independent copies of $Z$ whose starting point will be defined recursively on $i$. Also set $Z^{(i)}:=\sum_{j=1}^n Z^{(i,j)}$. Let $T^{(i,j)}_\delta$ denote the first hitting time of $(\delta, +\infty]$ by $Z^{(i,j)}$ and set $\tau_\delta^{(i)}:=\inf_{1\le j\le n} T^{(i,j)}_\delta$.
Now set the initial values of $Z^{(i,j)}$ as follows : $Z^{(1,j)}(0)=x/n<\delta$ for all $j$ and
$$
Z^{(i+1,j)}(0)=n^{-1}Z^{(i)}(\tau_\delta^{(i)})\qquad j=1,\ldots,n,\: i\ge 1,
$$
so that in particular $Z^{(1)}(0)=x$ and $Z^{(i+1)}(0)=Z^{(i)}(\tau_\delta^{(i)})$.
Next, define $I$ as
$$
I:=\min\{i\ge 1 : Z^{(i)}(\tau_\delta^{(i)}) >n\delta \}.
$$
Observe that by definition of $\tau^{(i)}_\delta$, all paths $(Z^{(i,j)}_t;t<\tau^{(i)}_\delta)$ remain below $\delta$, and so all paths $(Z^{(i)}_t;t<\tau^{(i)}_\delta)$ remain below $n\delta$.
Observe that each $Z^{(i)}$ has the same transition kernels as $Z$, and that $\tau_\delta^{(i)}$ is a stopping time for $(Z^{(i,j)}; j=1,\ldots,n)$, so that the concatenation, say $Z^\star$, in increasing order of $i=1,\ldots, I$, of the paths $Z^{(i)}$ all killed at $\tau^{(i)}_\delta>0$, has the same law as $Z$ killed at $T_{n\delta}$,
where
$$
T_{n\delta}:=\inf\{t\ge 0 : Z_t >n\delta \}.
$$
Now recall that for all $1\le i\le I$ and $1\le j\le n$, all paths $(Z^{(i,j)}_t;t<\tau^{(i)}_\delta)$ remain below $\delta$. Since these processes are CSBPs, they only take non-negative values, and therefore cannot have a negative jump of amplitude larger than $\delta$. Since CSBPs are Feller processes, they have no fixed time discontinuity and the independent copies $(Z^{(i,j)};j=1,\ldots,n)$ a.s. do not jump at the same time. As a consequence, $(Z^{(i)}_t;t<\tau^{(i)}_\delta)$ has no negative jump of amplitude larger than $\delta$. The same holds for $(Z^{(i)}_t;t\le\tau^{(i)}_\delta)$ because if $\tau^{(i)}_\delta$ is a jump time, it can only be the time of a positive jump. As a consequence, the process $Z^\star$ has no negative jump of amplitude larger than $\delta$, which implies
$$
T_{n\delta}<J_\delta.
$$
Letting $n\to\infty$ and because $\delta$ is arbitrarily small, this last inequality shows that $Z$ has no negative jumps.

Now define $Y$ as the image of $Z$ by the Lamperti transformation. Specifically, let $\kappa$ be the time-change defined as the  inverse of the additive functional $\theta:t\mapsto \int_0^t Z_s\, ds$ and let $Y$ be defined as $Z\circ\kappa$.  Recall Proposition \ref{prop : prelim} and the branching mechanism $-F$. %In particular, we will make use of the differentiability of $t\mapsto u_t$ and the branching mechanism $\Psi=-F$ defined in terms of $u_t$ by means of $\imf{F}{\lambda}=\partial \imf{u_t}{\lambda}/\partial t|_{t=0}$.
We consider the process $M^\lambda$ defined as
\begin{esn}
M^{\lambda}_t=\imf{e_\lambda}{Z_t}+\imf{F}{\lambda}\int_0^t Z_s\imf{e_\lambda}{Z_s}\, ds.
\end{esn}
We now prove that $M^\lambda$ 
%the extended generators $\mc{L}$ and $\mc{A}$, of $Z$ and $Y$, respectively.  
%From the differentiability of $t\mapsto \imf{u_t}{\lambda}$, we will prove that $\imf{\mc{L} e_\lambda}{x}=-x\imf{F}{\lambda}\imf{e_\lambda}{x}$ or equivalently, that 
is a martingale under $\p$.
Thanks to \eqref{eqn : plein de derivees partielles},
\begin{esn}
\frac{\partial}{\partial t}\imf{\se_x}{\imf{e_\lambda}{Z_t}}=
-x \frac{\partial \imf{u_t}{\lambda}}{\partial t} e^{-x\imf{u_t}{\lambda}}=-x F(\lambda)\frac{\partial \imf{u_t}{\lambda}}{\partial \lambda} e^{-x\imf{u_t}{\lambda}}=F(\lambda)\frac{\partial}{\partial \lambda}\imf{\se_x}{\imf{e_\lambda}{Z_t}},
\end{esn}
which gives as a conclusion
\begin{esn}
\frac{\partial}{\partial t}\imf{\se_x}{\imf{e_\lambda}{Z_t}}=-\imf{F}{\lambda}\imf{\se_x}{Z_t\imf{e_\lambda}{Z_t}}.
\end{esn}
This last equality proves that $M^\lambda$ has constant expectation, and the fact that it is a  martingale follows from the Markov property of $Z$.
%As a consequence, for any $s<t$
%\begin{esn}
%\imf{\se_x}{\imf{e_\lambda}{Z_t}-\imf{e_\lambda}{Z_s}|\F_s}
%=\imf{\se_{Z_s}}{\imf{e_\lambda}{Z_{t-s}}}-\imf{e_\lambda}{Z_s}
%=-\int_s^t\imf{\se_{Z_s}}{\imf{F}{\lambda}Z_{u-s}\imf{e_\lambda}{Z_{u-s}}},
%\end{esn}

Now $\kappa_t$ is a stopping time, so we can use the optional stopping theorem to get that for any $s>0$,
\begin{esn}
\imf{\se_x}{M^\lambda_{\kappa_t\wedge s}}=\imf{e_\lambda}{x}
\end{esn}which translates into
\begin{esn}
\imf{\se_x}{\imf{e_\lambda}{Z_{\kappa_t\wedge s}}}=\imf{e_\lambda}{x}-\imf{F}{\lambda}\imf{\se_x}{\int_0^{\kappa_t\wedge s}Z_u\imf{e_\lambda}{Z_u}\, du}.
\end{esn}
By the dominated convergence theorem and the monotone convergence theorem applied respectively to the l.h.s. and r.h.s. as $s\to\infty$, one obtains
\begin{esn}
\imf{\se_x}{\imf{e_\lambda}{Z_{\kappa_t}}}=\imf{e_\lambda}{x}-\imf{F}{\lambda}\imf{\se_x}{\int_0^{\kappa_t}Z_u\imf{e_\lambda}{Z_u}\, du}
\end{esn}
so that by using the definition of $Y$ and the fact that
\begin{esn}
\int_0^{\kappa_t}Z_u\imf{e_\lambda}{Z_u}\, du
=\int_0^tZ_{\kappa_u}\imf{e_\lambda}{Z_{\kappa_u}}\, d\kappa_u
=\int_0^t\imf{e_\lambda}{Z_{\kappa_u}}\indi{Z_{\kappa_u>0}}\, du
\end{esn}
(since $Z_{\kappa_u}\,d\kappa_u=\indi{Z_{\kappa_u}>0}\, du$), we get the equality
\begin{equation}
\label{genY}
\imf{\se_x}{\imf{e_\lambda}{Y_t}}=\imf{e_\lambda}{x}-\imf{F}{\lambda}\int_0^t \imf{\se_x}{\imf{e_\lambda}{Y_s}\indi{Y_s>0}}\, ds.
\end{equation}We denote by $T_0$ the first hitting time of $0$ by $Y$. As a first consequence of \eqref{genY}, note that if we write $\imf{\se_x}{\imf{e_\lambda}{Y_t}\indi{Y_t>0}}=\imf{\se_x}{\imf{e_\lambda}{Y_t}}-\imf{\p_x}{T_0\leq t}$, the following differential equation is satisfied 
\begin{equation}
\label{diffEq}
\frac{\partial \imf{\se_x}{\imf{e_\lambda}{Y_t}}}{\partial t}+\imf{F}{\lambda}\imf{\se_x}{\imf{e_\lambda}{Y_t}}=F(\lambda)\imf{\p_x}{T_0\leq t}.
\end{equation} 
We can therefore use standard techniques of solving first order linear differential equations to deduce the following equality
\begin{equation}
\label{explicitExponent}
\imf{\se_x}{\imf{e_\lambda}{Y_t}}= e^{-\lambda x-\imf{F}{\lambda}t}+\imf{\se_x}{\paren{1-e^{-\imf{F}{\lambda}\paren{t-T_0}}}\indi{T_0\leq t}}.
\end{equation}
%As a second consequence of \eqref{genY}, since $0$ is absorbing, the following calculation ensues\begin{esn}
%\left.\frac{\partial }{\partial t} \imf{\se_x}{\imf{e_\lambda}{Y_t}} \right|_{t=0}=-\imf{F}{\lambda}\imf{e_\lambda}{x}\indi{x>0},
%\end{esn}so that $\imf{\mc{A}e_\lambda}{x}=-\imf{F}{\lambda}\imf{e_\lambda}{x}\indi{x>0}$.

The last step is now to deduce that $Y$ is a L\'evy process stopped upon hitting 0.
In the case when $\imf{\p_x}{T_0=\infty}=1$ for some $x\in (0,\infty)$ the same property holds for all $x\in (0,\infty)$ and we conclude from \eqref{explicitExponent} that 
\begin{esn}
\imf{\se_x}{\imf{e_\lambda}{Y_t}}= e^{-\lambda x-\imf{F}{\lambda}t}.
\end{esn}
Then $Y$ is a L\'evy process which remains on $(0,\infty]$ when started there. It is therefore a subordinator and, from the last display, its Laplace exponent is $-F$. 

This step is more complicated when $\imf{\p_x}{T_0=\infty}<1$.  Because we would like to show how the L\'evy process emerges without appealing to analytical properties of the function $F$, we have been  able to achieve a proof  which makes no use of Proposition \ref{LaplaceExponent}. But since this proof is a bit long and technical, we propose hereafter a shorter one which uses Proposition \ref{LaplaceExponent}. Thanks to this proposition, there is  a spectrally positive L\'evy process $X$ with Laplace exponent $-F$, whose law we denote by $\mathbb{Q}$. 

We stick to the notation $T_0$ for both processes $X$ and $Y$. It is not difficult to arrive at the following equality
$$
\mathbb{Q}_x(e_\lambda(X_{t\wedge T_0}))=e^{-\lambda x-\imf{F}{\lambda}t}+\imf{\se_x}{\paren{1-e^{-\imf{F}{\lambda}\paren{t-T_0}}}\indi{T_0\leq t}}.
$$
Then thanks to \eqref{explicitExponent}, the only thing we have to check is that $T_0$ has the same law under $\p_x$ as under $\mathbb{Q}_x$. To see this, first recall that $T_0=\int_0^\infty Z_s ds$. Since the CSBP started at $x+y$ is the sum of two independent CSBPs started at $x$ and $y$ respectively, the distribution of $T_0$ under $\p_{x+y}$ is the convolution of the laws of $T_0$ under $\p_x$ and $\p_y$. We can therefore conclude that the distribution of $T_0$ under $\p_x$ is infinitely divisible on $[0,\infty]$, and that there is a nonnegative function  $\phi$ on $[0,\infty)$ such that $-\phi$ is the Laplace exponent of a subordinator and 
\begin{equation}
\label{eqn : T_0 et phi}
\imf{\se_x}{e^{-\lambda T_0}}=e^{-x\imf{\phi}{\lambda}}\qquad x,\lambda\ge 0.
\end{equation}
On the other hand, as is well-known \cite{bertoinLevyP},
$$
\imf{\mathbb{Q}_x}{e^{-\lambda T_0}}=e^{-x\imf{\varphi}{\lambda}}\qquad x,\lambda\ge 0,
$$
where $\varphi$ is the nonnegative function on $[0,\infty)$ characterised by $-F\circ\varphi =\mbox{Id}_{[0,\infty)}$. At this point, we have to make sure that $-F$ indeed takes positive values (i.e. $X$ is not a subordinator). On the contrary, if $F$ took only nonnegative values, then by \eqref{diffEq}, we would get
$$
\frac{\partial \imf{\se_x}{\imf{e_\lambda}{Y_t}}}{\partial t}=-\imf{F}{\lambda}\imf{\se_x}{\imf{e_\lambda}{Y_t}\indi{T_0> t}},
$$
so that all mappings $t\mapsto \se_x(e_\lambda(Y_t))$ would be nonincreasing. Letting $\lambda\to\infty$, we would get that the mapping $t\mapsto \p_x(Y_t=0)$ also is nonincreasing. But since $0$ is absorbing, this mapping is obviously nondecreasing, so that $\p_x(Y_t=0)=\p_x(Y_0=0)=0$ for all $t\ge 0$ and $x>0$. This contradicts the assumption that $Y$ hits 0 with positive probability.

If $(K_t^\lambda;t\ge 0)$ denotes the martingale obtained by taking conditional expectations of the terminal variable $\exp(-\lambda\int_0^\infty Z_s\, ds)$, we get\begin{esn}
K_t^\lambda = \imf{\exp}{ -\lambda\int_0^tZ_s\, ds-\phi(\lambda) Z_t}, 
\end{esn}so that in particular
\begin{esn}
e^{-x\phi(\lambda)}=\imf{\se_x}{\imf{\exp}{ -\lambda\int_0^tZ_s\, ds-\phi(\lambda) Z_t}}.
\end{esn}Informally, we evaluate the derivative w.r.t. $t$ of both sides at $t=0$ to obtain
\begin{equation}
\label{branchingMechanismFromIntegralOfZ}
0=-\lambda x e^{-x\phi(\lambda)}-xF(\phi(\lambda))e^{-x\phi(\lambda)},
\end{equation}so that $-F\circ \phi$ is the identity on $[0,\infty)$. This shows that $\phi=\varphi$, so that $T_0$ indeed has the same law under $\p_x$ as under $\mathbb{Q}_x$. It remains to give a formal proof of \eqref{branchingMechanismFromIntegralOfZ}. Write $K^\lambda$ as the product of the semimartingale $L_t^\lambda=\imf{\exp}{-\imf{\phi}{\lambda}Z_t}$ and the finite variation process $N_t^\lambda=\imf{\exp}{-\lambda\int_0^{t} Z_s\, ds}$; we can write $L_t^\lambda$ as\begin{esn}
L^\lambda_t=M_t-\imf{F}{\imf{\phi}{\lambda}}\int_0^t Z_se^{-\imf{\phi}{\lambda}Z_s}\, ds
\end{esn}where $M\equiv M^{\phi(\lambda)}$ is a (formerly defined) locally bounded martingale, in particular square integrable. Integration by parts gives us\begin{esn}
K_t^\lambda
%=K_0^\lambda+\int_0^tL^\lambda_s\, dN^\lambda_s+\int_0^tN^\lambda_s\, dL^{\lambda}_s
=e^{-\imf{\phi}{\lambda}x}+\int_0^tN^\lambda_{s-}\, dM_s-\int_0^tN^\lambda_sL^\lambda_sZ_s\left[\imf{F}{\imf{\phi}{\lambda}}+\lambda \right]\, ds.
\end{esn}Since $N^\lambda$ is bounded, its stochastic integral with respect to $M^\lambda$ is a square integrable martingale. Taking expectations, the second summand vanishes, and since by stochastic continuity of $Z$, $t\mapsto\imf{\se_x}{L^\lambda_tN^\lambda_tZ_t}$ is continuous (and bounded), we get
\begin{esn}
0=\left.\frac{\partial}{\partial t}\imf{\se_x}{K^\lambda_t}\right|_{t=0}=-e^{-\imf{\phi}{\lambda}x}\left[\imf{F}{\imf{\phi}{\lambda}}+\lambda \right]
\end{esn}which implies \eqref{branchingMechanismFromIntegralOfZ}.

\subsection{The inverse Lamperti transform of a spectrally positive L\'evy process}
\label{Amaury'sConverse}
In this subsection, we consider a L\'evy process $X$ with no negative jumps, started at $x\ge0$, stopped at its first hitting time $T_0$ of 0, and possibly sent to $\infty$ after an independent exponential time. Using the well-known L\'evy-It\^o decomposition of $X$ \cite{bertoinLevyP, kyprianouBook}, we can write for every $t<T_0$
\begin{equation}
\label{eqn : LI}
X_t= x+at+\sigma B^X_t+P^X_t+M^X_t,
\end{equation}where $a$ is a real number, $\sigma$ is a nonnegative real number (the Gaussian coefficient), $B^X$ is a standard Brownian motion, $P^X$ is a compound Poisson process and $M^X$ is a square integrable martingale, all terms being independent and adapted to the same filtration. To be more specific about $P^X$ and $M^X$, we denote by $\Lambda$ the L\'evy measure of $X$, which is a $\sigma$-finite measure  on $(0,\infty]$ (see Introduction) such that $\int_{(0,\infty]}(1\wedge r^2)\Lambda(dr)<\infty$. Then there is a Poisson measure $N^X$ on $[0,\infty)\times(0,\infty]$ with intensity measure $dt\,\Lambda(dr)$, and associated compensated measure $\tilde{N}^X (dt,dr):=N^X(dt,dr)-dt\,\Lambda(dr)$ (defined for $r< 1$) such that
\begin{esn}
P^X_t:=\int_0^t\int_{[1,\infty]}r\,N^X(ds,dr)
\quad\text{ and }\quad
M^X_t:=\int_0^t\int_{(0,1)}r\,\tilde{N}^X(ds,dr),
\end{esn}where the second integral is the $L^2$ limit, as $\eps\to 0$, of
\begin{esn}
M^{X,\eps}_t:=\int_0^t\int_{(\eps,1)}r\,\tilde{N}^X(ds,dr).
\end{esn}
Notice that, at the first jump of $P^X$ of infinite size, $X$ jumps to $\infty$ and remains there. It will be implicit in the rest of the proof that equalities hold in $[0,\infty]$.

Recall that the inverse Lamperti transform $Z$ of $X$ is given as follows. Set 
\begin{esn}
\kappa_t:=\int_0^{t\wedge T_0}\frac{ds}{X_s} ,
\end{esn}and let $\theta$ be its inverse 
\begin{esn}
\theta_t:=\inf\{u\ge 0: \kappa_u >t\}\in [0,\infty],
\end{esn}so that $Z:=X\circ \theta$. To prove that $Z$ is a CSBP, we will use the following proposition.
\begin{pro}
\label{prop : SDE}
There is a standard Brownian motion $B^Z$, and an independent Poisson measure $N^Z$ on $[0,\infty)\times(0,\infty)\times(0,\infty]$ with intensity measure $dt\,dv\,\Lambda(dr)$ such that
\begin{align}
\label{eqn : SDE}
Z_t
=x&+a\int_0^t Z_s\, ds + \sigma \int_0^t\sqrt{Z_s} \,dB^Z_s\nonumber
\\&+\int_0^t\int_0^{Z_{s-}}\int_{[1,\infty]}rN^Z(ds, dv, dr)+
\int_0^t\int_0^{Z_{s-}}\int_{(0,1)}r\tilde{N}^Z(ds, dv, dr),
\end{align}
where $\tilde{N}^Z$ is the compensated Poisson measure associated with $N^Z$.
\end{pro}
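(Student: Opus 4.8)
The plan is to substitute the L\'evy--It\^o decomposition \eqref{eqn : LI} into the time-changed process $Z_t=X_{\theta_t}$ and to identify the four resulting terms with those in \eqref{eqn : SDE}. The cornerstone is the relation between $\theta$ and $Z$: since $\kappa$ is absolutely continuous with $d\kappa_u=du/X_u$ on $(0,T_0)$ and $X_s=Z_{\kappa_s}>0$ there, its right inverse $\theta$ is absolutely continuous with $\theta_t'=X_{\theta_t}=Z_t$, so that $d\theta_t=Z_t\,dt$ and $\theta_t=\int_0^t Z_s\,ds$. Each $\theta_t$ is a stopping time for the filtration of $X$, so by optional stopping the time changes of the various $X$-martingales remain martingales for the time-changed filtration $(\mathcal F_{\theta_t})_{t\ge 0}$; this is what legitimises the manipulations below.

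First I would dispose of the continuous part. The drift $at$ becomes $a\theta_t=a\int_0^t Z_s\,ds$, the first term. The continuous martingale $\sigma B^X_{\theta_t}$ has quadratic variation $\sigma^2\theta_t=\sigma^2\int_0^t Z_s\,ds$; setting $B^Z_t:=\int_0^t Z_s^{-1/2}\,d(B^X_{\theta})_s$ (licit because $Z_s>0$ on the relevant range) gives $\langle B^Z\rangle_t=t$, so $B^Z$ is a standard Brownian motion by L\'evy's characterisation and $\sigma B^X_{\theta_t}=\sigma\int_0^t\sqrt{Z_s}\,dB^Z_s$, the second term.

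The heart of the proof is the construction of $N^Z$ and the verification that it is Poisson with intensity $ds\,dv\,\Lambda(dr)$. I would enlarge the probability space to attach to every atom $(u,r)$ of $N^X$ an independent mark $\ell$ uniform on $[0,1]$, producing a Poisson measure of intensity $du\,\Lambda(dr)\,d\ell$, and push it forward by $(u,r,\ell)\mapsto(\kappa_u,\,\ell X_{u-},\,r)=(\kappa_u,\,\ell Z_{\kappa_u-},\,r)$. The image lives on $\{v<Z_{s-}\}$, and by the time-change rule for compensators together with $du=Z_s\,ds$ and $dv=Z_{s-}\,d\ell$ its predictable compensator there is exactly $ds\,dv\,\Lambda(dr)$; since a point process with deterministic compensator is Poisson (Watanabe), adjoining an independent Poisson measure of the same intensity on the complement $\{v\ge Z_{s-}\}$ yields a genuine $N^Z$ whose restriction to $\{v<Z_{s-}\}$ records precisely the jumps of $Z$ with a uniformly spread mark. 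Pathwise this identifies the large-jump term, $P^X_{\theta_t}=\int_0^t\int_0^{Z_{s-}}\int_{[1,\infty]}r\,N^Z(ds,dv,dr)$. For the small jumps I would match the compensators of $M^{X,\eps}_{\theta_t}$ and of $\int_0^t\int_0^{Z_{s-}}\int_{(\eps,1)}r\,\tilde N^Z$ (both carry, in the $X$-clock, the compensator $\int_0^{\theta_t}\int_{(\eps,1)}r\,du\,\Lambda(dr)=\int_0^t Z_s\,ds\int_{(\eps,1)}r\,\Lambda(dr)$) and then pass to the $L^2$ limit as $\eps\to 0$ through the time change, using the It\^o isometry and $\mathbb E\int_0^t Z_s\,ds<\infty$ after a suitable localisation.

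I expect the main obstacle to be this jump step: showing that the marked, time-changed jump measure is genuinely Poisson with the asserted intensity (the compensator/Watanabe argument and the bookkeeping ensuring that the extra uniform mark spreads each jump uniformly over $(0,Z_{s-})$), and making the compensated small-jump integral survive the time change in $L^2$ rather than merely formally. Once \eqref{eqn : SDE} is established, the branching property of $Z$ follows readily, as indicated in the introduction.
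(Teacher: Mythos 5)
Your proposal follows essentially the same route as the paper: $d\theta_t=Z_t\,dt$, the Brownian part rescaled by $\sqrt{Z_s}$ and identified via L\'evy's characterisation, the jump measure built by marking each atom of $N^X$ with an independent uniform and adjoining an independent Poisson measure on the complementary region $\{v>Z_{t-}\}$, the intensity verified by a predictable-projection/compensator computation, and the small jumps handled by an $L^2$ limit. The only points the paper treats that you elide are the extension of $B^Z$ past the absorption time of $Z$ by an auxiliary independent Brownian motion on $\{Z_s=0\}$ (needed so that $\langle B^Z\rangle_t=t$ for all $t$, not just up to absorption) and the independence of $B^Z$ and $N^Z$ (part of the statement, which the paper obtains from Theorem 6.3 of Ikeda--Watanabe once both are shown to be a Brownian motion and a Poisson point process in the same filtration).
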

\begin{proof}[Proof]

Define $\mc{G}$ as the time-changed filtration, that is, $\mc{G}_t=\mc{F}_{\theta_t}$. We denote by $(T_n,\Delta_n)_{n\ge 1}$ an arbitrary labelling of the pairs associating jump times and jump sizes of $Z$. By a standard enlarging procedure, we can assume we are also given an independent $\mc{G}$-Brownian motion $B$, an independent $\mc{G}$-Poisson point process $N$ on $[0,\infty)\times(0,\infty)\times(0,\infty]$ with intensity measure $dt\,dv\,\Lambda(dr)$, and an independent sequence $(U_n)_{n\ge 1}$ of random variables uniformly ditributed on $(0,1)$ such that $U_n$ is $\mc{G}_{T_n}$-measurable and independent of $\mc{G}_{T_n-}$.

As a first step, we define $B^Z$ and $N^Z$. Recall the L\'evy-It\^o decomposition \eqref{eqn : LI}. Notice that $Y:=B^X\circ\theta$ is a continuous local martingale w.r.t. $\mc{G}$, so we can define $B^Z$ as
\begin{esn}
B^Z_t:=\int_0^t \frac{\indi{Z_s\not=0}}{\sqrt{Z_s}}\, dY_s + \int_0^t \indi{Z_s=0}\, dB_s.
\end{esn}
Next, we define $N^Z$ as
\begin{esn}
N^Z(dt,dv,dr):=\sum_{n}\delta_{\{T_n,\,U_nZ_{T_n-},\, \Delta_n\}}(dt, dv, dr)+\indi{v>Z_{t-}} \,N(dt, dv, dr),
\end{esn}
where $\delta$ denotes Dirac measures.

The second step consists in proving that $B^Z$ is a $\mc{G}$-Brownian motion, and that $N^Z$ is an independent $\mc{G}$-Poisson point process with intensity $dt\,dv\,\Lambda(dr)$. 

Observe that $B^Z$ is a continuous local martingale w.r.t. $\mc{G}$, and that its quadratic variation in this filtration equals
\begin{esn}
<B^Z>_t = \int_0^t \frac{\indi{Z_s\not=0}}{Z_s}\, d\theta_s + \int_0^t \indi{Z_s=0}\, ds=\int_0^t (\indi{Z_s\not=0}+ \indi{Z_s=0})\, ds=t,
\end{esn}
because $d\theta_s=Z_s\, ds$. This shows that $B^Z$ is a $\mc{G}$-Brownian motion. For $N^Z$, let $H$ be a non-negative $\mc{G}$-predictable process, let $f$ be a two-variable non-negative Borel function, and let $R^X$ be the image of $N^X$ by the mapping $(t,r)\mapsto(\theta_t,r)$. Then by predictable projection,
\begin{eqnarray*}
\se\sum_nH_{T_n}\, f(U_n Z_{T_n-},\, \Delta_n)	&=& \se \int_0^1 du \int_{[0,\infty)}\int_{(0,\infty]}R^X(dt,dr)\,H_t\,f(uZ_{t-}, r)\\
	&=& \se \int_0^1 du \int_{[0,\infty)}d\theta_t\int_{(0,\infty]} \Lambda(dr)\,H_t\,f(uZ_{t}, r)\\
	&=&\se \int_0^\infty Z_{t}dt \int_{(0,\infty]}\Lambda(dr)\int_0^1 du\, H_t\,f(uZ_{t}, r) \\
	&=&\se \int_0^\infty dt \int_{(0,\infty]}\Lambda(dr)\int_0^{Z_{t}}dv\, H_t\,f( v, r).
\end{eqnarray*}
Now since 
\begin{align*}
&\se \int_{[0,\infty)}\int_{(0,\infty)}\int_{(0,\infty]}N(dt,dv,dr)\,\indi{v>Z_{t-}} \, H_t\,f(v,r)
\\&=\se \int_0^\infty dt \int_{(0,\infty]}\Lambda(dr)\int_{Z_{t}}^\infty dv\, H_t\,f( v, r),
\end{align*}
we deduce 
\begin{esn}
\se \int_{[0,\infty)}\int_{(0,\infty)}\int_{(0,\infty]}N^Z(dt,dv,dr) H_t\,f(v,r)=\se
 \int_0^\infty dt\int_{0}^\infty dv\int_{(0,\infty]}\Lambda(dr)\, H_t\,f( v, r),
\end{esn}
which shows that $N^Z$ is a $\mc{G}$-Poisson point process with the claimed intensity. Finally, since $B^Z$ is a $\mc{G}$-Brownian motion  and  $N^Z$ is a $\mc{G}$-Poisson point process, Theorem 6.3 on p.77 of \cite{ikedaWatanabeII} ensures that $B^Z$ and $N^Z$ are independent.

The last step is showing that $Z$ indeed solves \eqref{eqn : SDE}. 
We will refer to the successive terms in \eqref{eqn : SDE} as $A_t$ (Lebesgue integral), $\sigma W_t$ (Brownian integral), $U_t$ (Poisson integral), and $V_t$ (compensated Poisson integral). Since we want to prove that $X\circ\theta=x+A+\sigma W+U+V$, and since $a\theta_t=A_t$, it is enough to prove that %on $(0, T)$ ($T\le\infty$ is the absorption time of $Z$ at 0),
 $B^X\circ\theta=: Y=W$, $P^X\circ\theta=U$, and $M^X\circ\theta=V$. %Indeed, if these equalities hold on $(0,T)$, then they will hold for all times, since it is easy to see that on $[T,\infty)$, \eqref{eqn : SDE} will boil down to $0=0$. 
 Denote by $T$ the absorption time of $Z$ at 0 and recall that by definition of $B^Z$,
\begin{esn}
\int_0^t \sqrt{Z_s}\,dB^Z_s = \int_0^t \sqrt{Z_s}\frac{\indi{Z_s\not=0}}{\sqrt{Z_s}}\,dY_s + \int_0^t \sqrt{Z_s}\indi{Z_s=0}\,dB_s,
\end{esn}where the second term vanishes. As a consequence, $W_t = Y_{t\wedge T}=Y_t$, which provides us with the first required equality. Since $P^X(\theta_t)$ is merely the sum of jumps of $X$ of size greater than 1 occurring before time $\theta_t$, it is also the sum of jumps of $Z$ of size greater than 1 occurring before time $t$. As a consequence,
\begin{esn}
P^X(\theta_t)= \sum_{n:T_n\le t} \Delta_n \indi{\Delta_n\ge 1}=\int_0^t\int_0^{Z_{s-}}\int_{(0,\infty]}r N^Z(ds,dv,dr)\,\indi{r\ge 1},
\end{esn}
which provides us with the second required equality. As for the third one, the same reasoning as previously yields the following, where limits are taken in $L^2$
\begin{align*}
M^X(\theta_t)
=\lim_{\varepsilon\downarrow 0}&\left(\sum_{n:T_n\le t} \Delta_n \indi{\varepsilon<\Delta_n< 1}- \theta_t \int_{(\varepsilon,1)}\Lambda (dr)\right)
\\= \lim_{\varepsilon\downarrow 0}&\left(\int_0^t\int_0^{Z_{s-}}\int_{(0,\infty]}r N^Z(ds,dv,dr)\,\indi{\varepsilon<r< 1}\right.
\\&- \left.\int_0^t ds \int_0^{Z_{s-}}dv\int_{(0,\infty]} \Lambda (dr)\,\indi{\varepsilon<r< 1}\right),
\end{align*}which indeeds shows that $M^X(\theta_t)=V_t$.\end{proof}

Now we want to prove that $X\circ\theta$ is a CSBP.
Thanks to Proposition \ref{prop : SDE}, we only need to check that any solution $Z$ to  \eqref{eqn : SDE} satisfies the  branching property. Let $Z^1$ and $Z^2$ be two independent copies of $Z$, one starting from $x_1$ and the other from $x_2$. Thanks to Proposition \ref{prop : SDE}, we can write the sum $\zeta$ of $Z_1$ and $Z_2$ as
\begin{esn}
\zeta_t:=Z^1_t+Z^2_t=x_1+x_2+A_t+\sigma W_t + U_t+V_t, 
\end{esn}
where, with obvious notation,
\begin{esn}
A_t:=a\int_0^t (Z^1_s+Z^2_s)\, ds,\quad\quad
W_t:=\int_0^t\sqrt{ Z^1_s} \,dB^1_s+\int_0^t\sqrt{ Z^2_s} \,dB^2_s,
\end{esn}
\begin{esn}
U_t:=\int_0^t\int_0^{Z^1_{s-}}\int_{[1,\infty]}rN^1(ds, dv, dr)+\int_0^t\int_0^{Z^2_{s-}}\int_{[1,\infty]}rN^2(ds, dv, dr),
\end{esn}
\begin{esn}
V_t:=\int_0^t\int_0^{Z^1_{s-}}\int_{(0,1)}r\tilde{N}^1(ds, dv, dr)+\int_0^t\int_0^{Z^2_{s-}}\int_{(0,1)}r\tilde{N}^2(ds, dv, dr),
\end{esn}
and $B^1, N^1, B^2, N^2$ are all independent and adapted to the same filtration, say $\mc{F} = (\mc{F}_t;t\ge 0)$. By a standard enlarging procedure, we can assume that we are also given an independent $\mc{F}$-Brownian motion $B$ and an independent $\mc{F}$-Poisson point process $N$ with intensity measure $dt\,dv\,\Lambda(dr)$. 

Notice that $W$ is a continuous local martingale with quadratic variation $t\mapsto \int_0^t \zeta_s ds$. Set
$$
B^\zeta_t := \int_0^t \frac{\indi{\zeta_s\not=0}}{\sqrt{\zeta_s}}\, dW_s + \int_0^t \indi{\zeta_s=0}\, dB_s.
$$
Then $B^\zeta$ is adapted to the filtration $\mc{F}$ and, letting $T$ denote the first hitting time of 0 by $\zeta$,
\begin{esn}
W_t=W_{t\wedge T}=\int_0^t \indi{\zeta_s\not=0}dW_s=\int_0^t\sqrt{ \zeta_s}\, dB^\zeta_s-\int_0^t\sqrt{ \zeta_s}\,\indi{\zeta_s=0}\, dB_s=\int_0^t\sqrt{ \zeta_s}\, dB^\zeta_s.
\end{esn}In addition, the quadratic variation of $B^\zeta$ in the filtration $\mc{F}$ is
$$
<B^\zeta>_t = \int_0^t \frac{\indi{\zeta_s\not=0}}{\zeta_s}\, d<W>_s + \int_0^t \indi{\zeta_s=0}\, ds = \int_0^t \indi{\zeta_s\not=0}\, ds + \int_0^t \indi{\zeta_s=0}\, ds = t,
$$
so that $B^\zeta$ is a $\mc{F}$-Brownian motion.
Now set
$$
N^\zeta (dt, dv, dr) = \indi{v< Z^1_{t-}} \,N^1(dt, dv, dr)+ \indi{Z^1_{t-}<v<\zeta_{t-}} \,N^2(dt, dv-Z^1_{t-}, dr)+\indi{v>\zeta_{t-}} \,N(dt, dv, dr)
$$
Then for any non-negative $\mc{F}$-predictable process $H=(H_t;t\ge0)$ and any two-variable non-negative Borel function $f$,
\begin{align*}
 &\int_{[0,\infty)}\int_{(0,\infty)}\int_{(0,\infty]}N^\zeta(dt,dv,dr)\,H_t \, f(v,r) \\
 &= \int_{[0,\infty)}\int_{(0,\infty)}\int_{(0,\infty]}N^1(dt,dv,dr)\,H_t\,\indi{v< Z^1_{t-}} \, f(v,r)\\
 &+ \int_{[0,\infty)}\int_{(0,\infty)}\int_{(0,\infty]}N^2(dt,dv,dr)\,H_t \,\indi{v< Z^2_{t-}} \,f(v+Z^1_{t-},r)\\
 &+ \int_{[0,\infty)}\int_{(0,\infty)}\int_{(0,\infty]}N(dt,dv,dr)\,H_t \,\indi{v> \zeta_{t-}} \, f(v,r),
\end{align*}
so that, taking $H_t=\indi{v<\zeta_{t-}}$ and $f(v,r)=r\indi{r\ge 1}$, we get
\begin{esn}
U_t=\int_0^t\int_{0}^{\zeta_{s-}}\int_{[1,\infty]}rN^\zeta(ds, dv, dr).
\end{esn}
In addition, by predictable projection,
\begin{esn}
\se
 \int_{[0,\infty)}\int_{(0,\infty)}\int_{(0,\infty]}N^\zeta(dt,dv,dr)\,H_t \, f(v,r) =\se \int_{[0,\infty)}\int_{(0,\infty)}\int_{(0,\infty]}dt\,dv\, \Lambda(dr)\,H_t\, f(v,r),
\end{esn}
so that $N^\zeta$ is a $\mc{F}$-Poisson point process with intensity $dt\,dv\, \Lambda(dr)$. Similarly, we could get that
\begin{esn}
V_t=\int_0^t\int_{0}^{\zeta_{s-}}\int_{(0,1)}r\tilde{N}^\zeta(ds, dv, dr), 
\end{esn}
concluding that
\begin{multline*}
\zeta_t:=x_1+x_2+a\int_0^t \zeta_s\, ds +\sigma\int_0^t\sqrt{ \zeta_s}\, dB^\zeta_s\\
+\int_0^t\int_0^{\zeta_{s-}}\int_{[1,\infty]}rN^\zeta(ds, dv, dr)+
\int_0^t\int_0^{\zeta_{s-}}\int_{(0,1)}r\tilde{N}^\zeta(ds, dv, dr).
\end{multline*}
Finally, since $B^\zeta$ is a $\mc{F}$-Brownian motion  and  $N^\zeta$ is a $\mc{F}$-Poisson point process, Theorem 6.3 of \cite{ikedaWatanabeII} ensures that $B^\zeta$ and $N^\zeta$ are independent. Pathwise uniqueness for \eqref{eqn : SDE}  is proved in \cite{dawsonLi} under the stronger integrability condition\begin{esn}
\int_{(0,\infty]}r\wedge r^2\,\imf{\Lambda}{dr}<\infty,
\end{esn}which excludes jumps of infinite size. We now sketch a proof, suggested by Zenghu Li, of pathwise uniqueness for lower semi-continuous solutions to \eqref{eqn : SDE}. As a consequence, we will conclude that $\zeta=Z^1+Z^2$ has the same law as the process $Z$ started at $x_1+x_2$, that is, $Z$ has the branching property. 

For each integer $n$, consider the equation\begin{align*}
Z_t
=x&+a\int_0^t Z_s\, ds + \sigma \int_0^t\sqrt{Z_s} \,dB^Z_s
\\&+\int_0^t\int_0^{Z_{s-}}\int_{[1,\infty]}r\wedge nN^Z(ds, dv, dr)+
\int_0^t\int_0^{Z_{s-}}\int_{(0,1)}r\tilde{N}^Z(ds, dv, dr),
\end{align*}Existence and pathwise uniqueness holds for this equation by Theorem 5.1 in \cite{dawsonLi}. Consider also two solutions  $Z'$ and $Z''$ to \eqref{eqn : SDE} and consider the first times $\tau'_n$ and $\tau''_n$ that they have a jump of magnitude greater than $n$. Set also $\tau_n=\tau'_n\wedge\tau''_n$. Then, $Z'$ and $Z''$ satisfy the above equation on $[0,\tau_n]$, and so they are indistinguishable on $[0,\tau_n]$. As $n\to\infty$, $\tau_n$ converges to the first instant when $Z'$ or $Z''$ have a jump of infinite size, say $\tau_{\infty}$, a jump that comes from  an atom of $N^Z$ of the form $\paren{\tau_\infty,v,\infty}$, so that both processes feature it. Since after this time both processes equal to $\infty$,  since the integral with respect to the Poisson process diverges, then $Z'$ and $Z''$ are indistinguishable.

\section{Proof through weak convergence}
\label{weakConv}
Here, we provide a second proof of Theorem \ref{lampRep}, this time through weak convergence. 
We use the fact that the Lamperti representation is easy to prove on discrete state-spaces, and introduce a topology on Skorohod space for which the inverse Lamperti transformation is \emph{continuous}. Then approximating L\'evy processes by compound Poisson processes, and CSBPs by discrete-state branching processes, we will deduce the Lamperti representation on the continuous state-space.

\subsection{Preliminaries}
Recall that $\rho$ is any metric on $E=[0,\infty]$ that makes $E$ homeomorphic to $[0,1]$. 
Recall the Skorohod-type space $D$ consisting of functions $\fun{f}{E}{E}$ which are \cadlag\ (so that in particular $\lim_{t\to\infty}\imf{f}{t}=\imf{f}{\infty}$), such that $\imf{f}{\infty}\in\set{0,\infty}$ and for which $\imf{f}{t}=0$ (resp. $=\infty$) implies that $\imf{f}{t+s}=0$  for all $s\geq 0$ (resp. $=\infty$).

For any $t\le \infty$, we denote by $\|\cdot\|_t$ the uniform norm on $[0,t]$, and  by $\rho_t^D$ the uniform distance with respect to $\rho$, that is,
$$
\rho_t^D(f,g):=\sup_{s\in[0,t]}\rho(f(s), g(s)).
$$
Let $\Lambda_t$ be the set of increasing homeomorphisms of $[0,t]$ into itself ($[0,\infty)$ if $t=\infty$), and define the metric $d_\infty$ on $D$ as
\begin{align*}
d_{\infty}(f,g):=1\wedge \inf_{\lambda\in\Lambda_\infty}\imf{\rho_\infty^D}{f,g\circ\lambda}\vee \|\lambda-\id\|_\infty.
\end{align*}

The proofs of the two following propositions can be found in Subsection \ref{technicalProofsWeakConvergence}.

\begin{pro}
\label{continuityInvLampTrans}
The inverse Lamperti transformation $L^{-1}$ is continuous on $\paren{D,d_\infty}$.
 %With the same notations, $L^{-1}(X^n)$ converges to $L^{-1}(X)$ in the metric $d_{\infty}$.  
\end{pro}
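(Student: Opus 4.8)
The plan is to characterise $d_\infty$-convergence through time-changes and then to manufacture, for the images, an explicit time-change built out of those for the sources. Suppose $g_n\to g$ in $\paren{D,d_\infty}$ and choose $\lambda_n\in\Lambda_\infty$ with $\|\lambda_n-\id\|_\infty\to0$ and $\sup_{s}\rho\paren{g_n(s),g(\lambda_n(s))}\to0$. Write $\kappa_g(t)=\int_0^t ds/g(s)$, let $\theta_g$ be its right-inverse, so that $L^{-1}(g)=g\circ\theta_g$, and abbreviate $\kappa_n=\kappa_{g_n}$, $\theta_n=\theta_{g_n}$. My target is to exhibit $\nu_n\in\Lambda_\infty$ with $\|\nu_n-\id\|_\infty\to0$ and $\sup_t\rho\paren{L^{-1}(g_n)(t),L^{-1}(g)(\nu_n(t))}\to0$, which is precisely $d_\infty\paren{L^{-1}(g_n),L^{-1}(g)}\to0$.

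The candidate is found by matching values: since $L^{-1}(g_n)(t)=g_n(\theta_n(t))\approx g(\lambda_n(\theta_n(t)))$, I would like $\lambda_n(\theta_n(t))=\theta_g(\nu_n(t))$, which suggests $\nu_n:=\kappa_g\circ\lambda_n\circ\theta_n$. With this choice one checks, using that $\theta_g\circ\kappa_g=\id$ off the flats of $\kappa_g$ (the flats being the time-set where $g=\infty$, on which $g$ equals $\infty$ anyway), that $L^{-1}(g)(\nu_n(t))=g(\lambda_n(\theta_n(t)))$, so the uniform value-closeness follows immediately from the Skorohod closeness of $g_n$ and $g$. A minor point is that $\nu_n$, being a composition with the possibly-discontinuous $\theta_n$, need not be a homeomorphism; but the jumps of $\theta_n$ sit where $g_n=\infty$ and the image is then the regular point $\infty$, so one repairs $\nu_n$ by linear interpolation across its jumps without affecting the value estimate.

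It then remains to prove $\|\nu_n-\id\|_\infty\to0$, and since $\nu_n=\kappa_g\circ\lambda_n\circ\theta_n$ with $\kappa_g$ continuous and $\lambda_n\to\id$, this reduces to $\theta_n\to\theta_g$, i.e.\ to convergence of the additive functionals $\kappa_n\to\kappa_g$ (right-inverses of uniformly-convergent continuous increasing functions converge, the limit being strictly increasing wherever $g<\infty$). To establish $\int_0^t ds/g_n(s)\to\int_0^t ds/g(s)$ I would split at a level $\eps$: on the bulk $\set{s:g(\lambda_n(s))\in[\eps,1/\eps]}$ the integrand $1/g$ is bounded and continuous, so $\rho$-closeness of $g_n$ to $g\circ\lambda_n$ together with the change of variables $u=\lambda_n(s)$ gives convergence; near the absorbing values $0$ and $\infty$ one uses that $\int\indi{g<\eps}\,ds/g\to0$ as $\eps\downarrow0$ (because $L^{-1}(g)\in D$ forces $\kappa_g$ finite up to absorption at $0$), whereas $\indi{g>1/\eps}/g\le\eps$ is harmless.

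The main obstacle is exactly this last estimate near the state $0$: since $1/g$ is unbounded there, $\rho$-closeness of $g_n$ and $g$ does not by itself control $\int\indi{g_n<\eps}\,ds/g_n$, and one needs this tail small \emph{uniformly in} $n$. This is where the two structural features are indispensable. The compactification of the state space by $\rho$ makes $0$ and $\infty$ regular points, so that $\rho$-Skorohod convergence forces the occupation of $g_n$ near $0$ to mimic that of $g$ (the absence of negative jumps squeezing the continuous descent to $0$); and the absorption property built into $D$ guarantees that once a path reaches $0$ or $\infty$ it stays, so the singular part of the reciprocal integral is confined to an arbitrarily short terminal window whose contribution vanishes with $\eps$. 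Combining bulk convergence, uniform smallness of the boundary tails, and letting first $n\to\infty$ and then $\eps\downarrow0$ should give $\kappa_n\to\kappa_g$, hence $\nu_n\to\id$, finishing the proof; it is precisely this extinction-and-explosion behaviour, tamed by the new topology and the absorption, that makes $L^{-1}$ continuous here though it fails to be so for the usual Skorohod topology.
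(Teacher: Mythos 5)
Your reduction to uniform convergence of the time changes on all of $[0,\infty)$ is where the argument breaks, and it breaks exactly at the point you flag as ``the main obstacle''. Three concrete problems. First, the parenthetical claim that $L^{-1}(g)\in D$ forces $\kappa_g$ to be finite up to the hitting time of $0$ is false: for $g(s)=(1-s)^2$ on $[0,1)$ and $0$ afterwards, $\kappa_g(1)=\int_0^1(1-s)^{-2}\,ds=\infty$, yet $L^{-1}(g)$ is a perfectly good element of $D$ (it tends to $0$ without reaching it). So $\int\mathbf{1}_{\{g<\varepsilon\}}\,ds/g$ need not vanish with $\varepsilon$, the $\varepsilon$-splitting proof of $\kappa_n\to\kappa_g$ collapses, and the uniform-in-$n$ bound on $\int\mathbf{1}_{\{g_n<\varepsilon\}}\,ds/g_n$ is asserted (via ``occupation near $0$ mimicking that of $g$'') but never proved --- $\rho$-closeness of $g_n$ to $g$ gives no control on $1/g_n$. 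Second, the intermediate target $\sup_u|\theta_n(u)-\theta_g(u)|\to 0$ is itself false in general: take $g=\mathbf{1}_{[0,1)}$ and $g_n=\mathbf{1}_{[0,1)}+n^{-1}\mathbf{1}_{[1,2)}$; then $d_\infty(g_n,g)\le\rho(1/n,0)\to 0$, but $\theta_g(u)=u\wedge 1$ while $\theta_n(u)\to 2$ as $u\to\infty$, so the uniform distance stays at $1$. Third, in that same example $\lambda_n\circ\theta_n$ exceeds $T_0(g)=1$ on a whole half-line of times, where $\kappa_g\equiv+\infty$; hence $\nu_n=\kappa_g\circ\lambda_n\circ\theta_n$ is identically infinite on an interval and cannot be repaired by interpolating across isolated jumps.

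The way out --- and it is the paper's proof --- is to give up on controlling the clock in the tail altogether. Since $d_\infty$-convergence forces $g_n(\infty)=g(\infty)$ from some index on, one cuts at a last-exit time $L$ of $g$ from a $\rho$-neighbourhood of the common absorbing value ($L_{2M}$ when $g\to\infty$, $L_\varepsilon$ when $g\to 0$); one has $L<T_0(g)$, so on the compact interval $[0,\kappa_g(L)]$ your bulk argument is sound and is essentially Whitt's continuity of the composition map, which is what the paper invokes to get Skorohod convergence of $L^{-1}(g_n)$ to $L^{-1}(g)$ there; past that time both $L^{-1}(g_n)$ and $L^{-1}(g)$ take values within $\varepsilon$ (in $\rho$) of the absorbing state, so their $d_\infty$-distance is bounded by the distance on the compact plus $\varepsilon$ no matter how badly the clocks disagree. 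Your instinct that absorption confines the difficulty to a terminal window is right, but the window must be measured with the metric $\rho$ on the compactified state space, not with the (possibly divergent) integral $\int ds/g$.
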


\begin{remark}
The usual Skorohod topology on $[0,t]$ defined in \cite[Ch. 3.12]{billingsley} (resp. on $[0,\infty)$ defined in \cite[Ch. 3.16]{billingsley}) is induced by the metric $d_t$ (resp. $d$), where
\begin{align*}
\imf{d_t}{f,g}=\inf_{\lambda\in\Lambda_t}\, \imf{\rho_t^D}{f,g\circ\lambda}\vee \|\lambda-\id\|_t
\quad\left(\text{resp.}\quad
\imf{d}{f,g}=\int_0^\infty e^{- t}\imf{d_t}{f,g}\, dt\right).
\end{align*}Then $\imf{d}{f_n,f}\to 0$ as $n\to\infty$ if and only if for every continuity point $t$ of $f$, $\imf{d_t}{f_n,f}\to 0$ (cf. Lemma 1 in \cite[Ch. 3.16, p. 167]{billingsley}), which gives a precise meaning to saying that $d$ controls the convergence of $f_n$ to $f$ only on compact subsets $[0,t]$ of $[0,\infty)$. It is easy to see, and will be repeatedly used, that $d_{\infty}(f_n,f)\to 0$ as  $n\to\infty$ implies $\imf{d_t}{f_n,f}\to 0$ for every continuity point $t$ of $f$, so that convergence with $d_\infty$ implies convergence in the usual Skorohod space. We also point out that in general,\begin{esn}
\imf{d_\infty}{f,g} \leq \imf{\max}{\imf{d_t}{f,g} , \imf{d_\infty}{f\circ s_t, g\circ s_t}}
\end{esn}where $f\circ s_t:= \imf{f}{t+\cdot}$, since the right-hand side is obtained by taking the infimum over homeomorphisms which send $t$ to itself.
\end{remark}

We will also need the following technical result on stopped L\'evy processes, as well as its corollary.

\begin{pro}
\label{convOnSkoTypeSpace}
Let $X$ and $(X^n)_n$ be spectrally positive L\'evy processes with Laplace exponents $ \Psi, \Psi_n$ respectively.
If  for all $\lambda\geq 0$ we have\begin{esn}
\lim_{n\to\infty} \Psi_n(\lambda) = \Psi(\lambda),
\end{esn}
then $X^n$ stopped whenever reaching zero converges weakly in $(D, d_\infty)$ to $X$ stopped whenever reaching zero. The same result holds if the processes $\paren{X^n}_n$ are rescaled compound Poisson processes with jumps in $\set{-1}\cup\overline\na$.
\end{pro}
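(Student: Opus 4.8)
The plan is to deduce the statement from the classical convergence theory for processes with independent and stationary increments, paying separately for the gap between the usual Skorohod metric and the finer metric $d_\infty$ by an analysis of the behaviour near time $\infty$. Since $\se_x(e^{-\lambda X^n_t})=e^{-\lambda x+t\Psi_n(\lambda)}\to e^{-\lambda x+t\Psi(\lambda)}$ for every $\lambda\ge 0$, the continuity theorem for infinitely divisible laws, together with the independence and stationarity of increments, yields convergence of the finite-dimensional distributions of $X^n$ to those of $X$; as the limit has no fixed discontinuities, the functional limit theorem for L\'evy processes upgrades this to weak convergence $X^n\Rightarrow X$ for the usual Skorohod ($J_1$) topology on $E$-valued \cadlag\ paths. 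The rescaled compound Poisson processes are covered at once, their Laplace exponents converging to $\Psi$ as well. Realising the processes on a common space through Skorohod's representation theorem, I may assume $X^n\to X$ almost surely in $J_1$ and argue pathwise; recall that $\rho$ is bounded since $E$ is compact.

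Next I treat the stopping at $0$. The decisive structural input is the absence of negative jumps: starting from $x>0$ the process crosses any level downwards only continuously, so on $\set{T_0<\infty}$ it reaches $0$ continuously and, unless it is a subordinator, crosses strictly, i.e. $X_{T_0}=0$ while $\inf_{s\in[T_0,T_0+\eps]}X_s<0$ for every $\eps>0$, almost surely. At such a limit path both the first-passage time $T_0$ and the stopped trajectory are continuous functionals for $J_1$, so the stopped processes $\bar X^n$ converge to $\bar X$ in the usual Skorohod topology and, in particular, $T_0^n\to T_0$; moreover $\bar X^n,\bar X\in D$. For the rescaled compound Poisson processes the downward crossing of $0$ happens by jumps of size $-1/c_n\to 0$, so the passage is asymptotically continuous and the same conclusion holds.

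It remains to improve $J_1$ convergence to $d_\infty$ convergence. The lever is the inequality recorded in the Remark above, $\imf{d_\infty}{f,g}\le\max\!\big(\imf{d_t}{f,g},\,\imf{d_\infty}{f\circ s_t,g\circ s_t}\big)$ with $f\circ s_t:=f(t+\cdot)$: its first term tends to $0$ along any continuity point $t$ of the limit, so the entire difficulty is the tail term, namely uniform control of the stopped paths on $[t,\infty)$ for large $t$, uniformly in $n$. This is exactly the information discarded by the usual metric, and it is the main obstacle. When the limit is absorbed in finite time the tail is harmless: if it hits $0$, then past the converging time $T_0^n$ all paths $\bar X^n$ equal $0$; if $X$ is killed (its exponent satisfies $\imf{\Psi}{0}<0$) it is sent to $\infty$ by a jump at an exponential time $\tau$, the approximating processes being killed at rates $-\imf{\Psi_n}{0}\to-\imf{\Psi}{0}$ so that $\tau^n\to\tau$ and past $\max(\tau,\tau^n)$ both paths equal $\infty$. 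Choosing $t$ beyond the converging absorption time then makes the tail term vanish for large $n$, which settles every finite-time absorption case.

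The genuinely delicate case is the one where the limit reaches $\infty$ only asymptotically, a subordinator with $\imf{\Psi}{0}=0$ drifting up to $\infty$ without being absorbed in finite time. Here the tail term is not a comparison of two eventually constant paths, and one must establish a tightness-at-infinity estimate of the form $\lim_{t\to\infty}\limsup_n\proba{\exists\,s\ge t:\ \delta\le \bar X^n_s\le K}=0$ for all $0<\delta<K<\infty$, expressing that the stopped processes cannot linger in a compact subset of $(0,\infty)$ for arbitrarily large times, uniformly in $n$. I would obtain it from uniform first-passage estimates for the family $(X^n)$ driven by $\Psi_n\to\Psi\le 0$: these processes drift to $+\infty$ at an asymptotically common rate, and a maximal inequality bounds the probability that $X^n$ returns below a fixed level after a large time. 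Taking the representative of $\rho$ for which $z\mapsto\imf{\rho}{z,\infty}$ is nonincreasing converts this into uniform smallness of $\sup_{s\ge t}\imf{\rho}{\bar X^n_s,\infty}$, which closes the tail term. We thus get almost-sure $d_\infty$-convergence of the coupled versions, hence weak convergence in $\paren{D,d_\infty}$; the final assertion for rescaled compound Poisson processes follows along the same lines, as they satisfy the same convergence of Laplace exponents and the same asymptotic continuity of the downward passage to $0$.
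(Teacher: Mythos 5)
Your architecture coincides with the paper's: pointwise convergence of the Laplace exponents gives finite-dimensional convergence and hence weak convergence in the usual Skorohod topology, Skorohod's representation theorem reduces everything to almost sure convergence, stopping at $0$ is a.s.\ continuous because a spectrally positive process crosses $0$ continuously, and the passage from the usual metric to $d_\infty$ is a tail-control problem that is trivial whenever the limit is absorbed (at $0$, or at $\infty$ by a killing jump, where the coupling of the exponential killing times with rates $-\Psi_n(0)\to-\Psi(0)$ is exactly the paper's argument). The one place where the two texts diverge is the case where $X$ drifts to $+\infty$, and there your proof has a genuine gap: the tightness-at-infinity estimate that you correctly isolate as the crux is only asserted (``I would obtain it from uniform first-passage estimates\ldots a maximal inequality bounds\ldots''), not proved. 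This is precisely where the paper does its work: convexity of the $\Psi_n$ together with $\Psi_n\to\Psi$ forces convergence $\Phi_n\to\Phi>0$ of the largest roots of the Laplace exponents, and the explicit first-passage identity for spectrally positive processes then yields $\imf{\p_{2M}}{\inf_{s\geq 0}X^n_s<M}\leq e^{-M\Phi_n}$, which is the uniform-in-$n$ control of the overall infimum after the (converging) hitting times of a high level that closes the tail term. Without identifying $\Phi$, $\Phi_n$ and this bound (or an equivalent uniform estimate), your ``maximal inequality'' step does not go through, since a drifting L\'evy process can return below any fixed level at arbitrarily large times with a probability that must be shown to be small uniformly in $n$.

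A secondary but real error: you describe the delicate case as ``a subordinator with $\Psi(0)=0$ drifting up to $\infty$''. For a subordinator the paths are nondecreasing, so once above a level $M$ they never return below it and the tail term is immediate; that case is easy. The hard case is the non-monotone spectrally positive process with $\Psi(0)=0$ and $\Phi>0$, which drifts to $+\infty$ but can revisit compact sets and can also hit $0$ with positive probability, so that the events $\set{T_0<\infty}$ and $\set{T_0=\infty}$ must be treated separately, as the paper does by splitting $\imf{\p}{\imf{d_\infty}{\tilde X^n,\tilde X}>\eps}$ over these two events.
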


We will use the last proposition in the form of the following corollary (see e.g. Lemma 5.4 in \cite[p. 287]{limitseqbp}).
For any $a,b >0$, consider the scaling operator $S^a_b$ on Skorohod space which sends $f$ to $t\mapsto \imf{f}{a\cdot t}/b$.

\begin{cor}
\label{cor : practical use}
Let $X$ be a spectrally positive L\'evy process with Laplace exponent $\Psi$, started at $x\ge 0$ and stopped whenever reaching 0.
There are a sequence of integers $a_n\to\infty$, and a sequence $\paren{X^n}_{n}$ of compound Poisson processes started at $x_n\in \na$, stopped upon reaching 0, and whose jump distribution is concentrated on $\set{-1}\cup\overline \na$, such that the Laplace exponent $\Psi^n$ of $S_{n}^{a_n}(X_n)$ converges to $\Psi$ and the sequence $\paren{S_{n}^{a_n}(X_n)}_{n}$ converges weakly to $X$ in $(D, d_\infty)$.
\end{cor}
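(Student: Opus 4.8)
The plan is to reduce the corollary to Proposition \ref{convOnSkoTypeSpace} by exhibiting, for the given spectrally positive L\'evy process $X$ with Laplace exponent $\Psi$, an explicit sequence of compound Poisson processes on $\set{-1}\cup\overline\na$ whose rescalings have Laplace exponents converging to $\Psi$. First I would recall the general scaling identity: if $X^n$ is a L\'evy process (or stopped compound Poisson process) with Laplace exponent $\Phi_n$, then the rescaled process $S^{a_n}_n(X^n)$ has Laplace exponent $\lambda\mapsto a_n\imf{\Phi_n}{\lambda/n}$, since $S^{a_n}_n$ speeds up time by $a_n$ and divides space by $n$, and the stopping at $0$ is preserved by the scaling because it only rescales the state space. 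Thus the target is to choose integers $a_n\to\infty$, starting points $x_n\in\na$ with $x_n/n\to x$, and jump laws $\mu^n$ on $\set{-1}\cup\overline\na$ such that the associated compound Poisson exponents $\Phi_n$ satisfy $a_n\imf{\Phi_n}{\lambda/n}\to\imf{\Psi}{\lambda}$ for every $\lambda\ge 0$.

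The key step is the construction of $\mu^n$ and $a_n$. Writing the L\'evy--Khintchine form of the spectrally positive exponent as
\begin{esn}
\imf{\Psi}{\lambda}=-q+\beta\lambda+\frac{\sigma^2}{2}\lambda^2+\int_{(0,\infty]}\paren{e^{-\lambda r}-1+\lambda r\indi{r<1}}\,\imf{\Lambda}{dr},
\end{esn}
I would discretize the L\'evy measure $\Lambda$ by a measure supported on the lattice $\frac1n\na$ (contributing positive integer jumps after multiplying by $n$), encode the Brownian part $\tfrac{\sigma^2}{2}\lambda^2$ and the drift $\beta$ by an appropriate number of unit up/down steps of the random walk (a classical diffusive-rescaling computation, where a symmetric $\pm1$ mechanism at rate of order $a_n$ produces the Gaussian term in the limit, and a net downward drift of $-1$ steps produces $\beta$), and encode the killing rate $q$ by an atom at $\infty$. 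The rate $a_n$ is chosen large enough that these lattice increments, viewed under $S^{a_n}_n$, have the correct infinitesimal behaviour. The claim $a_n\imf{\Phi_n}{\lambda/n}\to\imf{\Psi}{\lambda}$ is then a Taylor-expansion computation of the compound Poisson exponent $\imf{\Phi_n}{\mu}=\lambda_n\int(e^{-\mu s}-1)\,\imf{\mu^n}{ds}$ at $\mu=\lambda/n$, term by term against the four pieces of $\Psi$; this is routine and I would not grind through it.

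Once the convergence $\Psi^n\to\Psi$ of the rescaled exponents is established, the weak convergence in $(D,d_\infty)$ of $S^{a_n}_n(X^n)$ (stopped at $0$) to $X$ (stopped at $0$) follows \emph{immediately} from the second assertion of Proposition \ref{convOnSkoTypeSpace}, which is precisely stated for rescaled compound Poisson processes with jumps in $\set{-1}\cup\overline\na$. The only remaining bookkeeping is to match the initial conditions, choosing $x_n:=\lfloor nx\rfloor$ so that $x_n/n\to x$ and hence the starting points of the rescaled processes converge to $x$; since the Lamperti-type stopping and the scaling are continuous in the starting point, this does not affect the convergence. The main obstacle I anticipate is purely in the \emph{construction} step: one must verify that the discretizing measures $\mu^n$ can simultaneously reproduce all four components of $\Psi$ (killing, drift, Gaussian, jump) under a \emph{single} time-scale $a_n$ and space-scale $n$, while keeping $\mu^n$ a genuine probability law on $\set{-1}\cup\overline\na$; handling the Gaussian coefficient $\sigma$ together with the small-jump compensation near $0$ is the delicate point, but it is exactly the standard domain-of-attraction construction for spectrally positive L\'evy processes, so I expect no essential difficulty beyond careful constants.
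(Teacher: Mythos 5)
Your proposal is correct and follows essentially the same route as the paper: the paper gives no proof of its own but cites Lemma 5.4 of \cite{limitseqbp} for the existence of rescaled compound Poisson processes with jumps in $\set{-1}\cup\overline\na$ whose Laplace exponents $a_n\Phi_n(\cdot/n)$ converge to $\Psi$, and then the weak convergence in $(D,d_\infty)$ is exactly the second assertion of Proposition \ref{convOnSkoTypeSpace}, which is how you conclude as well. The only difference is that you sketch the domain-of-attraction construction explicitly where the paper outsources it to the literature; your sketch is the standard one and the delicate points you flag (single scale $a_n$, Gaussian part via high-rate $\pm1$ jumps, killing via an atom at $\infty$) are the right ones.
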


We begin the proof of Theorem \ref{lampRep} by studying the inverse Lamperti transformation.

\subsection{The inverse Lamperti transform of a spectrally negative L\'evy process}
Let $X$ and $\paren{X^n}_{n\in\na}$ be as in Corollary \ref{cor : practical use}. As we have noted in the introduction, denoting $L^{-1}$ the inverse Lamperti transformation, $L^{-1}(X^n)$ satisfies the branching property in $\overline\na$. Also it is obvious that $Z^n:=L^{-1}\circ S^{a_n}_n(X^n)$ satisfies the branching property in $n^{-1}\overline\na$ (e.g. because  $L^{-1}\circ S^a_b=S^{a /b}_b\circ L^{-1}$). 

Thanks to Proposition \ref{continuityInvLampTrans}, the sequence of branching processes $(Z^n)_n$ converges weakly in $(D,d_\infty)$ to the Markov process $Z:=L^{-1}(X)$ (time-changing a \cadlag\ strong Markov by the inverse of an additive functional gives another \cadlag\ strong Markov process, cf. \cite[Vol. 1, X.5]{dynkin}). To show that $Z$ is a CSBP, we have to check that it has inherited the branching property from the sequence $\paren{Z^n}_{n\in\na}$, and thanks to the Markov property, it is sufficient to check the branching property at any fixed time. The result is due to the  following two facts. First, because neither of the discrete branching processes $Z^n$ jumps at fixed times, neither does $Z$ jump at fixed times. Second, it is known that for any fixed time $t$, the mapping $D:f\mapsto f(t)$ is continuous at any $f$ which is continuous at $t$. As a conclusion, for any fixed time $t$, the mapping $D:f\mapsto f(t)$ is a.s. continuous at $Z$. This ends the proof.

\begin{remark}
Recall the usual topology on Skorohod space from the remark in the previous subsection. For this topology, the inverse Lamperti transformation $L^{-1}$ is \emph{not} continuous, and the problem is due to explosions as seen in the example below. %We mention that the difficulty with explosions has been overcome by Helland in \cite{weakConvUnderTimeChange} by introducing a non-metrizable topology on Skorohod space. %With a little bit of work, it is  actually possible to prove the \emph{converse implication} of Theorem \ref{lampRep} by pasting together adequately Lemma 6.3, Theorem 4.1 and Theorem 4.2 of \cite{weakConvUnderTimeChange}.
\end{remark}

\begin{example}
\label{discontinuityLampTransExample}
 Consider an element $f$ of $D$ such that $\imf{f}{s}\to \infty$,\begin{esn}%D^a?? a for absorbed?? Chech the rest of the article. 
\imf{\kappa_\infty}{f}=\int_0^\infty \frac{ds}{\imf{f}{s}}<\infty,
\end{esn}and note that its inverse Lamperti transform $\imf{L^{-1}}{f}$ blows up at $\kappa_\infty$. If we approximate $f$ by $f_n=f\indi{[0,n]}$, then the inverse Lamperti transform of $f_n$ is always zero after $\imf{\kappa_n}{f}$ so that it cannot converge to $\imf{L^{-1}}{f}$; it does converge to another limit however. This is illustrated in Figure \ref{discontinuousCharLampTrans}. An explanation of why the problem occurs is that $\imf{\kappa}{f}$ contracts $[0,\infty)$ into $[0,\kappa_\infty)$ and so to have convergence in Skorohod space of a sequence of functions when they approach a limit taking infinite values, we have to control the behaviour of the trajectories of the sequence on $[0,\infty)$ instead of only on its compacts subsets as with the usual metrics. 
\begin{figure}
\caption{Discontinuity of the inverse Lamperti transformation.}
\label{discontinuousCharLampTrans}
\includegraphics[width=\textwidth]{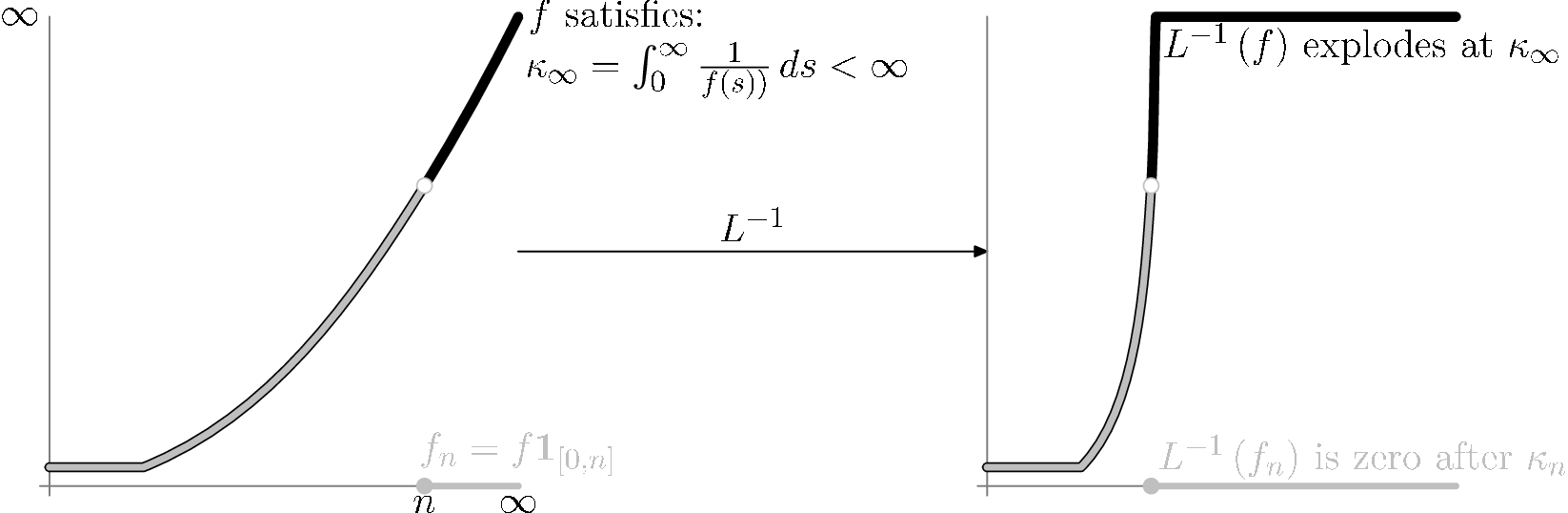}
\end{figure}
\end{example}

\subsection{The Lamperti transform of a CSBP}
\label{WeakConvergenceDirectSense}
%We now prove, by weak convergence considerations, that the Lamperti transformation applied to a CSBP yields a (killed) spectrally L\'evy process stopped if it reaches zero. 

Let $Z$ be a CSBP with law $\p_x$ when it starts at $x$. As we have shown in Propositions \ref{prop : prelim} and \ref{differentiability}, there are nonnegative real numbers $u_t(\lambda)$, $t,\lambda\ge 0$, such that 
\begin{esn}
\imf{\se_x}{e^{-\lambda Z_t}}=e^{-x\imf{u_t}{\lambda}},
\end{esn}
%where $u_0$ is the idendity function and the semigroup property $u_{t+s}=u_t\circ u_s$. We also know that 
and $t\mapsto \imf{u_t}{\lambda}$ is differentiable on $[0,\infty)$. In addition, there is a real function $\Psi$ on $[0,\infty)$ called the branching mechanism of $Z$, such that 
\begin{equation}
\label{ode}
\frac{\partial \imf{u_t}{\lambda}}{\partial t}=-\imf{\Psi}{\imf{u_t}{{\lambda}}}\qquad t,\lambda\ge 0,
\end{equation}and $\Psi$ is the Laplace exponent of a spectrally positive L\'evy process.

Then let $X$ and $\paren{X^n}_{n\in\na}$ be as in Corollary \ref{cor : practical use}. Set $\tilde Z^n:=L^{-1}(S_{n}^{a_n}(X_n))$. %By continuity of the inverse Lamperti transformation, we will get the convergence of the sequence  $(\tilde Z^n)_n$ to some process $\tilde Z:=L^{-1}(Y)$. The rest of the proof consists in proving that $\tilde Z$ and $Z$ are equally distributed, from which we will conclude that $L(Z)$ has the law of $L(\tilde Z)= Y$.\\
%\\
%We now state an approximation lemma for $Y$ based on Lemma 5.4 of \cite[p. 287]{limitseqbp}. It can be proved in essentially the same way. 
%\begin{lem}
%There exists a sequence $\paren{X^p}_{p\in\na}$ of compound Poisson processes started at 0, whose jump distribution is concentrated on $\set{-1}\cup\overline \na$, and a sequence of integers $a_p\to\infty$, such that the Laplace exponent $\Psi^p$ of $\tilde X^p_t:=\frac{1}{p}X^p_{a_p\cdot  t}$ converges to $\Psi$.
%\end{lem}
%
% Thanks to Proposition \ref{convOnSkoTypeSpace}, we also have convergence in law with respect to the metric $d_\infty$ of $\tilde Y^p$ to $Y$, where $\tilde Y^p$ is the compound Poisson process $\tilde X^p$ stopped whenever reaching 0. It will always be implicit that $Y^p_0=x_p$ and $Y_0=x$, where $(x_p)$ is an integer sequence such that $x_p/p\to x$. 
%
%
%L^{-1}(\tilde X^n)$, where $\tilde X^n:=S_{n}^{a_n}(X_n)
%
As in the proof of the converse implication, each $\tilde Z^n$ satisfies the branching property in $n^{-1}\overline\na$, and thanks to Proposition \ref{continuityInvLampTrans}, the sequence of branching processes $(\tilde Z^n)_n$ converges weakly in $(D,d_\infty)$ to the Markov process $\tilde Z:=L^{-1}(X)$. (As remarked earlier, time-changing a \cadlag\ strong Markov by the inverse of an additive functional gives another \cadlag\ strong Markov process.)
%$Z^p=\imf{L^{-1}}{Y^p}$ are such that the processes $\tilde Z^p$ given by\begin{esn}
%\tilde Z^p_t=\frac{1}{p}Z_{t\cdot d_p/p}
%\end{esn}
We end the proof showing that the finite dimensional distributions of $\tilde Z^n$ converge to those of $Z$, which will entail the equality in distribution between $Z$ and $\tilde Z$, and subsequently between $L(Z)$ and $X$, since $X=L(\tilde Z)$.

%From this, the law of $\imf{L}{Z}$ would be equal to that of $Y$ stopped if it reaches zero. 

Since  $\tilde Z^n$ is a branching process, there are real numbers $\imf{\tilde u^n_t}{\lambda}$ such that
\begin{esn}
\imf{\se_{x_n/n} 
}{e^{-\lambda \tilde Z^n_t}}= \imf{\exp}{-(x_n/n)\imf{\tilde u^n_t}{\lambda}}\qquad t,\lambda \ge 0,
\end{esn}
and we also have
\begin{equation}
\label{integralEquationDSBPs}
\imf{\tilde u^n_t}{\lambda}=\lambda-\int_0^t\imf{\Psi^n}{\imf{\tilde u^n_s}{\lambda}}\, ds .
\end{equation}By convergence of the sequence of branching processes $(\tilde Z^n)$, $\imf{\tilde u^n_t}{\lambda}$ converges pointwise to some nonnegative real number $\tilde u_t(\lambda)$ such that
\begin{esn}
\imf{\se_x 
}{e^{-\lambda \tilde Z_t}}= e^{-x\imf{\tilde u_t}{\lambda}}\qquad t,\lambda \ge 0.
\end{esn}Since $\Psi^n$ converges to $\Psi$ pointwise and they are convex on $(0,\infty)$, convergence is uniform on compact subsets of $(0,\infty)$; by taking limits in \eqref{integralEquationDSBPs}, we obtain
\begin{esn}
\imf{\tilde u_t}{\lambda}=\lambda-\int_0^t\imf{\Psi}{\imf{\tilde u_s}{\lambda}}\, ds.
\end{esn}Because of the local Lipschitz character of $\Psi$ on $(0,\infty)$ and Gronwall's lemma $\tilde u_t=u_t$. As a consequence,
\begin{esn}
\imf{\se_x 
}{e^{-\lambda \tilde Z_t}}= e^{-x\imf{ u_t}{\lambda}}=\imf{\se_x 
}{e^{-\lambda Z_t}}\qquad t,\lambda \ge 0,
\end{esn}so that $Z$ and $\tilde Z$ have the same law.
\subsection{Proof of propositions \ref{continuityInvLampTrans} and \ref{convOnSkoTypeSpace}}
\label{technicalProofsWeakConvergence}

\subsubsection{Proof of Proposition \ref{continuityInvLampTrans}}
There are two cases to consider since every element of $D$ either tends to $0$ or to $\infty$. At the outset however, there are some simple propositions that cover both. 

%First of all, note that, thanks to the following argument, if $\imf{d}{f_n,f}\to 0$ then $\imf{\kappa}{f_n}\to\imf{\kappa}{f}$ uniformly on compact sets of $[0,\imf{T_0}{f})$. Since $\imf{d}{f_n,f}\to 0$ then for every continuity point $t<\imf{T_0}{f}$ of $f$ there exists  $I>0$ such that for sufficiently big $n$, $f_n>I$ on $[0,t]$. We have therefore $1/f_n\circ\lambda <1/I$ for all $\lambda\in \Lambda_t$. Furthermore, there exists $\lambda_n\in\Lambda_t$ converging uniformly to the identity on $[0,t]$ such that $f_n\circ\lambda_n\to f$, and so $1/f_n\circ\lambda_n\to 1/f$ uniformly. By the bounded convergence theorem, $\imf{\kappa}{f_n\circ\lambda_n}\to \imf{\kappa}{f}$, and since the limit is increasing and continuous, convergence holds uniformly on compact sets. However, $1/f_n\circ\lambda_n-1/f_n$ converges to zero almost surely on $[0,t]$, so that by the bounded convergence theorem, $\imf{\kappa}{f_n}\to\imf{\kappa}{f}$ as $n\to\infty$ uniformly on compact sets of $[0,\imf{T_0}{f})$. 

First of all, note that  if $\imf{d}{f_n,f}\to 0$ then $\imf{\kappa}{f_n}\to\imf{\kappa}{f}$ uniformly on compact sets of $[0,\imf{T_0}{f})$.

Second, note that if $\imf{d^c}{f_n,f}\to 0$ then $\imf{\theta}{f_n}\to\imf{\theta}{f}$ uniformly on compact sets of $[0,\imf{\kappa_{\imf{T_0}{f}}}{f})$. This follows from the following argument. It suffices to prove pointwise convergence on $[0,\imf{\kappa_{\imf{T_0}{f}}}{f})$; let $s<t<s'$ be three points on that interval. Then $\imf{\theta}{f}<\imf{T_0}{f}$ on $s,s'$ and $t$. By the preceding paragraph,\begin{esn}
\imf{\kappa_{\imf{\theta_s}{f}}}{f_n}\to \imf{\kappa_{\imf{\theta_s}{f}}}{f}=s<t
\end{esn}and so eventually, $\imf{\theta_s}{f}<\imf{\theta_t}{f_n}$. By the same argument, we see that eventually $\imf{\theta_t}{f_n}<\imf{\theta_{s'}}{f}$. By taking $s,s'\to t$, we see that $\imf{\theta_t}{f_n}\to\imf{\theta_t}{f}$. 

Note that the preceding two facts are true even if we are working with the metric $d$. The particular nature of the metric $d_\infty$ come into play at this stage: note that if $\imf{d_\infty}{f_n,f}\to 0$ then $\imf{f_n}{\infty}=\imf{f}{\infty}$ from a given index onwards.

We will now consider the case when $\imf{f}{\infty}=\infty$. Let $M>0$ be such that the $\rho$-diameter of $[M,\infty]$ is less than $\eps$. The quantity $\imf{L_{2M}}{f}=\sup\set{t\geq 0:\imf{f}{t}\leq M}$ is finite and $\inf_{s\geq \imf{L_{2M}}{f}}\imf{f}{s}\ge M$. Also, $\imf{\kappa_{\imf{L_{2M}}{f}}}{f}<\imf{\kappa_\infty}{f}$ (the rhs is $\imf{\kappa_{\imf{T_0}{f}}}{f}$) and so the preceding paragraphs tell us that $\imf{\theta}{f_n}\to\imf{\theta}{f}$  uniformly on $\imf{\kappa_{\imf{L_{2M}}{f}}}{f}$. Whitt's result on the continuity of time-changes \cite{usefulFunctionsWhitt} tells us that $\imf{L^{-1}}{f_n}\to\imf{L^{-1}}{f}$ (with respect to the Skorohod metric) on $[0,\imf{\kappa}{\imf{L_{2M}}{f}}{f}]$. Since\begin{esn}
\imf{\rho}{\imf{f}{s},\imf{f_n}{s}}<\eps
\end{esn}for $s>L_{2M}$, then $\imf{d_\infty}{\imf{L^{-1}}{f_n},\imf{L^{-1}}{f}}\to 0$. 

The remaining case, which is handled similarly, is when $\imf{f}{\infty}=0$. Suppose $\imf{f}{0}>0$, since otherwise there is nothing to prove. For $\eps>0$ small enough, we can introduce the (finite) quantity $\imf{L_\eps}{f}=\sup\set{t\geq 0:\imf{f}{t}>\eps}$. Since $L_\eps<T_0$, by the same arguments as above, we have that $\imf{L^{-1}}{f_n}\to\imf{L^{-1}}{f}$ (with respect to the Skorohod topology) on $[0,\imf{\kappa_{L_\eps}}{f}]$. Since, eventually, $f_n>2\eps$ on $[\imf{L}{\eps},\infty)$, then $\imf{d_\infty}{\imf{L^{-1}}{f_n},\imf{L^{-1}}{f}}\to 0$ as $n\to\infty$. 

\subsubsection{Proof of Proposition \ref{convOnSkoTypeSpace}}
Note that a given L\'evy process $X$ is either killed at an independent exponential time, or drifts to $\infty$, or to $-\infty$ or has $\liminf_{t\to\infty}X_t=-\infty$ and $\liminf_{t\to\infty}=\infty$ (it oscillates). When we stop a spectrally positive L\'evy process at 0 there are therefore three cases: either the stopped process jumps to $\infty$, or it drifts to $\infty$ without reaching 0 or it is stopped at 0 at a finite time. In any case,  the trajectories of the stopped process belong to $D$. The convergence of the Laplace exponents of the approximating sequence $X^n$ implies the convergence of the finite-dimensional distributions and so Skorohod's classical result implies that the convergence holds on $\paren{D,d}$ (cf. \cite[Thm. 15.17, p. 298]{kallenberg}). To study the convergence of the stopped processes on $\paren{D,d_\infty}$, we will use Skorohod's representation theorem to assume that, on a given probability space, $X^n$ converges almost-surely to $X$. Let $\imf{T_\eps}{X}$ denote $\inf\set{s\geq 0:X_t\leq \eps}\in [0,\infty]$; we will add the subscript $n$ when the stopping times are defined from $X^n$. Note that on the set $\imf{T_0}{X}<\infty$, $\imf{T_{0+}}{X}=\imf{T_0}{X}$, by the quasi-left-continuity of L\'evy processes (cf. \cite[Pro. I.2.7, p.21]{bertoinLevyP}). Stopping at the hitting time of zero is therefore a.s. continuous at $X$ (on $\paren{D,d}$, as can be seen in \cite{pages} and Lemma VI.2.10 in \cite[p. 340]{jacodShiryaevII}) and so $X^n$ stopped at zero, denoted $\tilde X^n$, converges almost surely to $\tilde{X}$ (equal to $X$ stopped at zero). We will now divide the proof in three cases.

\begin{description}
\item[$X$ drifts to $-\infty$ or oscillates] In this case, $T_0$ is finite almost surely. As we have remarked, $T^n_0\to T_0$ and so for $h>0$, $T^n_0\leq T_0+h$ from a given index onwards almost surely. Since L\'evy processes do not jump at fixed times, $X$ is continuous at $T_0+h$ for $h>0$ and so $\imf{d_{T_0+h}}{X,X^n}\to 0$ as $n\to\infty$. Note that\begin{esn}
\limsup_{n\to\infty}\imf{d_\infty}{\tilde X,\tilde X^n}\leq\limsup_{n\to\infty}\imf{d_{T_0+h}}{X,X^n}=0.
\end{esn}

\item[$X$ drifts to $\infty$]We will begin by verifying that the convergence of $X^n$ to $X$ (on $\paren{D,d}$) implies that we can uniformly control the overall infimum of the $X^n$. This is formally achieved in the following statement: given $\delta>0$ there exists some $M>0$ such that 
\begin{esn}
\p_{2M}(\inf_{s\geq 0} X_s <M) < \delta
\quad\text{and }\quad
\p_{2M}(\inf_{s\geq 0} X^n_s <M) < \delta
\end{esn}
from a given index onwards. For the proof, note that since $\Psi_n$ and $\Psi$ are strictly convex, we may  denote their largest roots by $\Phi_n$ and $\Phi$ respectively. When $X^n$ is a spectally positive L\'evy processes, the Laplace exponents $\Psi_n$ and  $\Psi$ restricted to $[\Phi_n,\infty)$ and $[\Phi,\infty)$ have inverses $\phi_n$ and $\phi$. The convergence of the Laplace exponents and their convexity
allow us to prove that
$\Phi_n \to \Phi$ as $n\to \infty$. When $X$ drifts to infinity, then $\Phi>0$ and from \cite[Thm. 1, p.189]{bertoinLevyP} and the above, we deduce 
\begin{esn}
\imf{\p_{2M}}{\inf_{s\geq 0} X_s <M},\limsup_{n\to\infty} \imf{\p_{2M}}{\inf_{s\geq 0} X^n_s <M}\leq e^{-M\Phi}.
\end{esn}By taking $M$ large enough, the claim follows. When the approximating sequence is constituted of rescaled left continuous compound Poisson processes, we adapt the proof of \cite[Thm. 1, p.189]{bertoinLevyP} to arrive at the same conclusion. 

Since $X$ drifts to $\infty$, it reaches arbitrarily high levels, and since $X^n$ converges to $X$ on  $\paren{D,d}$, then $X^n$ will also reach arbitrarily high levels. Coupled with our control on the infimum, we will see that from a given (random) time onwards and from a given index, $X^n$ and $X$ are close since they are above a high enough barrier. Formally, we will now prove that $\tilde X^n$ converges to $\tilde X$ in probability (using $d_\infty$): for any $\eps,\delta>0$ let $M>0$ be such that the $\rho$-diameter of $[M, \infty]$ is less than $\eps$ and $\imf{\exp}{-M\Phi}<\delta/2$.
We introduce the stopping time:
\begin{align*}
S_{3M}=\inf\set{s\geq 0 : X_s >3M},
\end{align*}
as well as the corresponding hitting times times $S^n_{2M}$ of $[2M,\infty)$ for $X_n, n=1,2,\cdots $. Observe that
\begin{align*}
&\imf{\p}{\imf{d_{\infty}}{\tilde X^n, \tilde X}> \eps}
\\&= \imf{\p}{\imf{d_{\infty}}{\tilde X^n, \tilde X}> \eps,T_0<\infty}+\imf{\p}{\imf{d_{\infty}}{\tilde X^n, \tilde X}>\eps,T_0=\infty}.
\end{align*}
The first summand of the right-hand side of the preceding inequality converges to zero by the arguments of the previous case. Consider $h>0$ and let us bound the second summand by\begin{esn}
\imf{\p}{C_n}+\imf{\p}{D_n}
\end{esn}where\begin{esn}
C_n = \set{\imf{d_{S_{3M}+h}}{X^n, X}> \eps,T_0=\infty}
\end{esn}and\begin{esn}
D_n = \set{\imf{d_{\infty}}{\paren{X^n_{S_{3M}+h+t}}_{t\geq 0}, \paren{X_{S_{3M}+h+t}}_{t\geq 0}}> \eps,T_0=\infty}.
\end{esn}Since L\'evy processes do not jump at fixed times, the Strong Markov property implies that almost surely $X$ does not jump at time $S_{3M}+h$ so that $\imf{d_{S_{3M}+h}}{X^n,X}\to 0$ almost surely. Hence\begin{esn}
\lim_{n\to \infty} \p(C_n) = 0.
\end{esn}This also implies that from a given index onwards, $S^n_{2M}\leq S_{3M}+h$ so that\begin{esn}
\proba{S^n_{2M}> S_{3M}+h}\to 0.
\end{esn}Hence, it remains to bound $\proba{D_n,S^n_{2M}\leq S_{3M}+h}$. If the $d_\infty$ distance between $\paren{X_{S_{3M+h+t}}}_{t\geq 0}$ and $\paren{X^n_{S_{2M+h+t}}}_{t\geq 0}$ is to be greater than $\eps$ while $S^n_{2M}\leq S_{3M+h}$ then either $X^n$ goes below $M$ after $S^n_{2M}$ or $X$ goes below $M$ after $S_{3M}$. The probability of both events is smaller than $\delta/2$ from a given index onwards because of our choice of $M$, so that\begin{esn}
\limsup_n\imf{\p}{\imf{d_{\infty}}{\tilde X^n, \tilde X}> \eps}\leq \delta
\end{esn}for every $\delta>0$. We conclude that  $\imf{d_\infty}{X^n,X}\to 0$ in probability.

\item[$X$ jumps to $\infty$] This case is characterized by $q:=-\imf{\Psi}{0}>0$. It can be reduced to the $q=0$ case by means of an independent exponential variable of rate $q$: if $X'$ is a L\'evy process whose Laplace exponent is $\Psi-\imf{\Psi}{0}$ and $T$ is an exponential variable with mean $1$ independent of $X'$ and we define $X''$ as $X'$ sent to $\infty$ at time $T/q$, then $X''$ has the same law as $X$. If $X'^n$ is a L\'evy process with Laplace exponent $\Psi_n-\imf{\Psi_n}{0}$ (and $q_n:=-\imf{\Psi_n}{0}$) then $X'^n$ converges in law to $X'$ on $\paren{D,d}$; as before, we will use Skorohod's representation theorem so that convergence holds almost surely on a given probability space. We now extend that space so as to have an additional mean $1$ exponential variable $T$ independent of $X'$ and $\paren{X'^n}_{n\in\na}$ and define on that space $X''$ and $X''^n$ as above by killing $X'$ and $X'^n$ at times $T/q$ and $T/q_n$ respectively. Since $q_n\to q$ by hypothesis,  and $X'$ is continuous at time $T/q+h$ almost surely (for any $h>0$), then $\imf{d_{T/q+h}}{X'^n,X'}\to 0$ and since $T/q_n\leq  T/q+h$ from a given index onwards, then $\imf{d_\infty}{X''^n,X''}\to 0$ almost surely. 
\end{description}

\section*{Acknowledgements}
The authors wish to thank Zenghu Li and Steve Evans for their comments on a previous version of this paper, and Sylvie M\'el\'eard, as well as Zenghu Li again, for their help with stochastic integral equations. G.U.B.'s research was supported by CoNaCyT grant No. 174498. Logistic and financial support received from PROYECTO
PAPIITT-IN120605 and Instituto de Matematicas. A.L. is very thankful to the staff and colleagues at the Instituto for their hospitality during his stay.

\bibliography{treescsbp}
\bibliographystyle{acmtrans-ims}

\end{document}